\definecolor{webgreen}{rgb}{0,.5,0}
\definecolor{webbrown}{rgb}{.6,0,0}
\begin{document}

\begin{center} \vskip 1cm {\LARGE\bf  The open  polynomials of the finite topologies   }  \vskip 1cm \large
Moussa Benoumhani\\
Al-Imam University\\
Faculty of Sciences, Department of Mathematics\\
P.O.Box 90950 Riyadh 11623 Saudi Arabia\\
\href{mailto:mbenoumhani@kku.edu.sa}{\tt benoumhani@yahoo.com  }
\end{center}

\begin{abstract}
Let $\tau $ be a topology on the finite set $X_n$. We consider the
open polynomial associated with the topology $\tau$. Its
coefficients are the
cardinality of open sets of size  $j=0,\;\ldots ,\;n.$\\
 J. Brown \cite{JB} asked when this polynomial has only real zeros. We prove
that this polynomial has real zeros, only in the trivial case where
$\tau$ is the discrete topology. Then, we weaken Brown's question:
for which topology this polynomial is log--concave, or at least
unimodal? A  partial answer is given. Precisely, we prove that if
the topology has a large number of open sets, then its open
polynomial is unimodal.
 \end{abstract}
\numberwithin{equation}{section}

\setlength{\textwidth}{7.2in} \setlength{\oddsidemargin}{.1in} \setlength{\evensidemargin}{.1in}
\setlength{\topmargin}{-.5in} \setlength{\textheight}{8.9in}

\newtheorem{theorem}{Theorem}[section]
\newtheorem{acknowledgement}[theorem]{Acknowledgement}
\newtheorem{corollary}[theorem]{Corollary}
\newtheorem{definition}[theorem]{Definition}
\newtheorem{lemma}[theorem]{Lemma}
\newtheorem{proposition}[theorem]{Proposition}
\newtheorem{remark}[theorem]{Remark}
\newtheorem{conclusion}[theorem]{Conclusion}
\newtheorem{example}[theorem]{Example}

\section{Introduction}
Let $X= X_n$ be an $n$-- element set. A topology $\tau \subseteq
\mathcal{P}(X)$ on a set $X$ is just a family of subsets
 (called open sets) containing $X, \; \phi$, and is closed under union and finite
 intersection. A base or a basis for $\tau$ is subset of $\tau$,
 such that each open set is a union of members of the basis. We need
 the following definitions in finite topologies.
  \begin{definition}
  A partition $\displaystyle \pi= \left ( A_1, A_2, \; ...,\; A_{l} \right)$ of the set $X$ is a collection of subsets of $X$ such that all the $A_i$ are nonempty,
  $\displaystyle  A_i \bigcap A_j =\phi,\; { \rm for} \; i\neq j,$  and  $X=\displaystyle \bigcup_{i=1}^{l} A_i. $
  \end{definition}
   \begin{definition}
  A partition $\displaystyle \pi= \left ( A_1, A_2, \; ...,\;A_{l}\right )$ of the finite set $X_n$ is called of type
  $ \left( \alpha_1,\; \alpha_2,\;\ldots, \; \alpha_l \right ) $   if it contains $ \alpha_i$ subsets of cardinality $i$, $($note that $ \sum_{i}i\alpha_i=n$ $)$.
  \end{definition}
  A partition topology is given in the following definition
   \begin{definition}
  A partition topology $\tau$  on $X$, is a topology that is induced by a
  partition of $X$.
  \end{definition}
  For example the topology  $\tau= \{ \phi,\;\{ a,\;b\},\; \{c,\;d\},\;\{ e\},\; \{a,\;b\;,c,\;d\},\;\{ a,\;b\;,e\},\; \{c,\;d\;,e\},\; X \}$ on  $X=\{ a,\; b\;,c,\;d,\;e
  \}$ is induced by the partition $\displaystyle \pi= \left( \{a,\;b \},\;\{c, \;d\},\;\{e\}\right
  )$. \\
  In general the union of two topologies on a set is not a topology. But, there is a kind of union to remedy to this lack: the disjoint union topology
\begin{definition}
 Let $X$,  $Y$ be two topological spaces, such that $X\bigcap Y=\phi$. The disjoint union topology  on $Z=X\bigcup Y$, is
 defined by
 $$\tau= \left\{ U \; such \; that \; U \in \tau_X \;{\rm  or} \; U \in \tau_Y \; {\rm or}\;U=U_X\bigcup U_Y \right \}$$
  \end{definition}
When the space $X$ is finite, and is a disjoint union of two spaces
$Y_1$ and $Y_2$, then its topology  $\tau$ satisfies
 $|\tau |=|\tau_{Y_1}|\cdot |\tau_{Y_2}|$. This result will be helpful for us in the
next sections. \\
 Let $\tau$ be a topology on $X_n$. Let us designate by $u_j$ the number of open sets
in $\tau$ having cardinality $j$, and consider the polynomial
 $P(x)=\displaystyle \sum_{j=0}^{n}u_{j}x^j$, called the {\it open set polynomial} of
 $\tau$. \\
 J. Brown \cite{JB}  raised the following question: \\
 {\it When are all of the roots of an open set polynomial real}? \\
 In the next section, we answer this question by showing that the only case where this happens is for the discrete
 topology. Before doing this, we recall some facts about polynomials and log--concave sequences. \\
 A real positive sequence $(a_j)_{j=0}^{n}$ is said to be  { \it unimodal }, if there exist integers $k_0, \; k_1,\; k_0\leq k_1$,( integers $k_0\leq j\leq k_1$  are the modes of the sequence) such that
 $$ a_0 \leq a_1\leq \ldots \leq a_{k_0} =a_{k_0+1}=\ldots=a_{k_1}\geq a_{k_1+1} \geq \ldots \geq a_n.$$ It is { \it log-concave} if
 $\displaystyle  a_j^2\geq a_{j-1}a_{j+1},\; {\rm for} \; 1\leq j \leq n-1.$
 A real sequence $(a_i)$ is said to be  with no internal zeros (NIZ), if $ i<j, a_i\neq 0, \; a_j\neq 0 \; {\rm then}\; a_l \neq 0 $ for every $l, i\leq l \leq j$. A (NIZ) log-concave sequence is obviously unimodal, but the converse is not true. The sequence 1, 1, 4, 5, 4, 2,1 is unimodal but not log-concave. Note the importance of (NIZ): the sequence 0, 1, 0, 0, 2, 1 is  log-concave but not unimodal.
A real polynomial is unimodal (log-concave, symmetric, respectively) provided that the sequence of its coefficients is unimodal (log-concave, symmetric, respectively).\\
If inequalities in the log-concavity definition are strict, then the
sequence is called strictly log-concave (SLC for short), and in this
case, it has at most two consecutive modes. The following result may
be helpful in proving unimodality:
\begin{theorem}
 If the polynomial $ \displaystyle \sum_{j=0}^{n}a_jx^j $ associated with the sequence  $(a_j)_{j=0}^{n}$ has only real zeros then
    \begin{equation} a_j^2 \geq \frac{j+1}{j}\frac{n-j+1}{n-j} a_{j-1}a_{j+1},\; {\rm for} \; 1\leq j \leq n-1 \end{equation}
    \end{theorem}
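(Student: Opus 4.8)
The plan is to recognize inequality (1.1) as the classical Newton inequalities and to prove it directly, using the two operations that preserve real-rootedness: differentiation and coefficient reversal. Write $p(x)=\sum_{j=0}^{n}a_jx^j$ and assume all its zeros are real; I may assume $a_n\neq0$, so that $\deg p=n$. By Rolle's theorem every derivative $p^{(k)}$ again has only real zeros, and the reversal $p^{\ast}(x)=x^{n}p(1/x)$ has only real zeros as well, since its nonzero roots are the reciprocals of the nonzero roots of $p$. The idea is to combine these two operations to strip away all coefficients except the three consecutive ones $a_{j-1},a_j,a_{j+1}$, thereby reducing the whole problem to the nonnegativity of the discriminant of a single quadratic. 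Note that the allowed range $1\le j\le n-1$ is exactly what makes the differentiation counts below nonnegative.

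Concretely, I would first differentiate $p$ exactly $j-1$ times. Then $p^{(j-1)}$ has degree $n-j+1$, has only real zeros, and its three lowest coefficients (those of $x^0,x^1,x^2$) are $(j-1)!\,a_{j-1}$, $j!\,a_j$, and $\tfrac{(j+1)!}{2!}\,a_{j+1}$. Applying the reversal turns $p^{(j-1)}$ into a real-rooted polynomial of the same degree in which these three coefficients now occupy the top, as the coefficients of $x^{n-j+1},x^{n-j},x^{n-j-1}$. Differentiating this reversed polynomial a further $n-j-1$ times leaves a quadratic $Ax^{2}+Bx+C$ with only real zeros, where
\begin{equation}
A=\tfrac{(n-j+1)!\,(j-1)!}{2}\,a_{j-1},\qquad B=(n-j)!\,j!\,a_j,\qquad C=\tfrac{(n-j-1)!\,(j+1)!}{2}\,a_{j+1}.
\end{equation}
Real zeros force $B^{2}\geq 4AC$. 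Dividing out the common factorials, one checks that $\frac{(n-j+1)!\,(j-1)!\,(n-j-1)!\,(j+1)!}{\big((n-j)!\big)^{2}\,(j!)^{2}}=\frac{j+1}{j}\cdot\frac{n-j+1}{n-j}$, so the discriminant inequality is precisely (1.1).

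The main obstacle I anticipate is not conceptual but the bookkeeping: I must keep track of the exact factorial factors produced by the $j-1$ differentiations, by the reversal, and by the further $n-j-1$ differentiations, and verify that they collapse to the stated binomial-type ratio. A secondary point requiring care is the degenerate behaviour of the reversal when a constant or leading coefficient vanishes (roots at $0$, or a drop in degree): here I would argue that the reversed polynomial still has only real zeros after accounting for the multiplicity of the root at the origin, and that when $a_{j-1}a_{j+1}\le 0$ the inequality (1.1) holds trivially since its right-hand side is then nonpositive while $a_j^2\ge 0$. Once the quadratic reduction and the simplification of its constant are in hand, (1.1) follows for all $1\le j\le n-1$.
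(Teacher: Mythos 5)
Your proposal is correct, but it cannot match the paper's proof for a simple reason: the paper gives none. This theorem appears in the introduction as a quoted background fact (it is the classical set of Newton inequalities, implicitly referred to the literature, e.g.\ Brenti's survey), so what you have written is a proof the paper omits entirely. Your route is the standard classical one: real-rootedness is preserved under differentiation (Rolle) and under the reversal $p\mapsto x^{n}p(1/x)$, and composing $j-1$ differentiations, one reversal, and $n-j-1$ further differentiations isolates $a_{j-1},a_j,a_{j+1}$ in a real-rooted quadratic whose discriminant condition is exactly (1.1). I checked the bookkeeping you were worried about: the three coefficients are indeed $A=\tfrac{(n-j+1)!\,(j-1)!}{2}a_{j-1}$, $B=(n-j)!\,j!\,a_j$, $C=\tfrac{(n-j-1)!\,(j+1)!}{2}a_{j+1}$; the factor $4$ in $B^2\geq 4AC$ cancels against the two halves, and the factorial ratio collapses to $\frac{j+1}{j}\cdot\frac{n-j+1}{n-j}$ as you claim. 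Two small tightenings are worth recording. First, the reduction ``I may assume $a_n\neq 0$'' needs one more line: if $\deg p=d<n$, apply the degree-$d$ inequality and note that $\frac{d-j+1}{d-j}\geq\frac{n-j+1}{n-j}$, so the degree-$d$ Newton inequality implies the stated one whenever $a_{j-1}a_{j+1}>0$, all other cases being trivial by sign. Second, your degeneracy discussion is correctly resolved by the sign observation itself: the quadratic argument is only ever needed when $a_{j-1}a_{j+1}>0$, and in that case the constant term of $p^{(j-1)}$ is nonzero, so the reversal keeps full degree and the final polynomial is a genuine quadratic; whenever a degree drop could occur, (1.1) is already trivial. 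With those two sentences added, the proof is complete.
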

When dealing with sequences, we manipulate their sum, product,...
etc. So, we may ask about the preservation of these properties.
Recall that the convolution product of the sequences
 $ \displaystyle (a_j)_{j=0}^n $ and $ \displaystyle (b_j)_{j=0}^n$ is just the sequence $ \displaystyle (c_j) $, such that $
\displaystyle c_k=\sum_{i+j=k}a_jb_i $. It is known that the
convolution of two log-concave sequence is
 a log-concave one, but the convolution of two unimodal sequence is not
 necessarily  unimodal. However,  we need the following result
\begin{theorem}
The convolution two log-concave sequences is
 a log-concave sequence.\\
The convolution of a unimodal and
 a log-concave sequence is a unimodal one.
    \end{theorem}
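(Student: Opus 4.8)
The plan is to treat both assertions through one mechanism: a nonnegative log--concave sequence \emph{with no internal zeros} is encoded by a totally positive (of order $2$) Toeplitz matrix. I assume throughout that the sequences are (NIZ), which is the hypothesis under which the statements actually hold; for sequences obeying only the bare inequalities $a_j^2\ge a_{j-1}a_{j+1}$ the convolution can fail to be log--concave, as the self--convolution of $(1,1,0,0,1)$ shows. To each sequence $(a_j)$, padded by zeros outside $\{0,\ldots,n\}$, attach the banded Toeplitz matrix $A=(a_{i-j})_{i,j}$. I will use two facts. First, for nonnegative NIZ sequences, log--concavity of $(a_j)$ is equivalent to nonnegativity of every $2\times2$ minor of $A$: each such minor has the form $a_ua_v-a_{u-\delta}a_{v+\delta}$ with $\delta\ge 1$ and the pair $\{u,v\}$ nested inside $\{u-\delta,\,v+\delta\}$, so the minor condition says exactly that the more tightly clustered index pair carries the larger product, which is the iterated form of consecutive log--concavity. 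Second, the Toeplitz matrix of the convolution $(c_k)$ is the product $AB$, since $(AB)_{ij}=\sum_m a_{i-m}b_{m-j}=\sum_p a_p b_{(i-j)-p}=c_{i-j}$.

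With this in hand the first statement is almost immediate. Since $A$ and $B$ are each totally positive of order $2$, the Cauchy--Binet formula expresses every $2\times2$ minor of $AB$ as a sum of products (minor of $A$)$\cdot$(minor of $B$), each factor nonnegative; hence all $2\times2$ minors of $AB$ are nonnegative, i.e.\ $(c_k)$ is log--concave and NIZ. I would flag here why the tempting purely elementary route is the main obstacle: expanding $c_k^2-c_{k-1}c_{k+1}$ and symmetrising in the two summation indices produces brackets $2b_pb_q-b_{p-1}b_{q+1}-b_{p+1}b_{q-1}$ that are \emph{not} sign--definite (they go negative already for $b=(1,2,3,2,1)$), so no term--by--term comparison succeeds; one is forced to use the log--concavity of \emph{both} factors at once, which is precisely what Cauchy--Binet packages.

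For the second statement I would pass to the difference operator. Because convolution commutes with finite differencing, $\Delta(a*b)=(\Delta a)*b$, where $\Delta a_i=a_i-a_{i-1}$. Unimodality of $(a_i)$ says exactly that $\Delta a$ is $\ge 0$ up to some index and $\le 0$ thereafter, i.e.\ it has a single (weak) sign change, from $+$ to $-$. The goal is then to show $(\Delta a)*b$ again changes sign at most once and with the same orientation, for then $a*b$ increases and then decreases, i.e.\ is unimodal; the NIZ hypothesis on $a$ and the positivity of $b$ on its support ensure $a*b$ has no internal zeros, so this sign pattern genuinely yields unimodality. The crux is this sign--change step, which is the variation--diminishing property of a PF$_2$ (log--concave NIZ) sequence: convolving against such a $b$ cannot manufacture a new ``$-$ then $+$'' crossing. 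This is where the $2\times2$ minors of $B$ re--enter, and I expect establishing it --- either by quoting the variation--diminishing theorem for totally positive kernels, or by a direct $2\times2$ minor argument ruling out indices $k_1<k_2$ with $e_{k_1}<0<e_{k_2}$ for $e=(\Delta a)*b$ --- to be the principal obstacle of the whole proof.
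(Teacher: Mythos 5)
The paper never proves this theorem: it is stated in the introduction as a recalled fact from the literature (Brenti's survey and the classical total-positivity literature are the implicit sources), so there is no proof of record to compare yours against step by step. Judged on its own terms, your proposal is correct and is essentially the classical argument. For the first assertion, the encoding of a nonnegative NIZ log-concave sequence as a TP$_2$ Toeplitz matrix, the identity $(AB)_{ij}=c_{i-j}$, and Cauchy--Binet giving nonnegativity of all $2\times 2$ minors of the product is exactly the standard proof; your remark that the naive term-by-term estimate fails because the symmetrized brackets $2b_pb_q-b_{p-1}b_{q+1}-b_{p+1}b_{q-1}$ are not sign-definite is accurate, and your counterexample $(1,1,0,0,1)$ correctly shows that the NIZ hypothesis cannot be dropped --- a caveat the paper's statement omits, even though the paper's own example $0,1,0,0,2,1$ shows that bare log-concavity tolerates internal zeros. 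For the second assertion, the reduction via $\Delta(a*b)=(\Delta a)*b$ together with the fact that unimodality of $a$ means $\Delta a$ has at most one sign change (from $+$ to $-$) is again the standard route; the one place where your write-up stops short is the crux you yourself identify, namely that convolution against a PF$_2$ sequence does not increase the number of sign changes and preserves their orientation. Citing the classical variation-diminishing theorem there is legitimate (it is how the literature the paper leans on handles it, e.g., Karlin, and the unimodal-times-log-concave statement goes back to Keilson and Gerber), but if you intend the proof to be self-contained you still owe the direct $2\times 2$-minor argument ruling out indices $k_1<k_2$ with $e_{k_1}<0<e_{k_2}$, which you only sketch.
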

\noindent  Since we are concerned by real polynomials with positive coefficients, recall the simple facts:\\
    1- If there is $a_j=0$ for a ceratin $j, 1  \leq j \leq n-1$ then the polynomial can not have all its zeros real. \\
    2- If the inequalities (1,1) fail for  a ceratin $j, 1 \leq j \leq n-1$, then the roots of the polynomial can not be all real.\\
  \section{ The roots of the open polynomial are  not real}
  Our first result is the
    \begin{theorem}
  The open set polynomial $ \displaystyle \sum_{j=0}^{n}u_jx^j $ has only real zeros if and only if $\tau$ is the discrete  topology on  $ X_n$.    \end{theorem}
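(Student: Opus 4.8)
The plan is to treat the two directions separately. For the easy direction, if $\tau$ is the discrete topology then every subset of $X_n$ is open, so $u_j = \binom{n}{j}$ and the open set polynomial is $P(x) = \sum_{j=0}^n \binom{n}{j}x^j = (1+x)^n$, whose only zero is $-1$ (with multiplicity $n$); in particular all its zeros are real.

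For the converse I would argue by extracting a strong numerical consequence of real-rootedness and colliding it with a trivial counting bound. First note two constraints that hold for every topology on $X_n$: since $\phi, X_n \in \tau$ we have $u_0 = u_n = 1$, and since the only subsets of size $1$ are the $n$ singletons we have $u_1 \le n$, with equality precisely when every singleton is open, which, by closure under unions, happens exactly when $\tau$ is discrete.

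Now suppose $P(x) = \sum_{j=0}^n u_j x^j$ has only real zeros. Because all coefficients are nonnegative and $u_0 = 1 > 0$, the polynomial has no nonnegative real root (for $x \ge 0$ one has $P(x) \ge u_0 = 1 > 0$); hence, since $u_n = 1$, we may factor $P(x) = \prod_{i=1}^n (x + r_i)$ with all $r_i > 0$. Reading off coefficients gives $u_0 = \prod_i r_i = 1$ and $u_1 = e_{n-1}(r_1,\dots,r_n) = \sum_i \prod_{k \ne i} r_k = \sum_i 1/r_i$, the last equality using $\prod_k r_k = 1$. The key step is then a single application of the AM--GM inequality: $u_1 = \sum_i 1/r_i \ge n\,(\prod_i 1/r_i)^{1/n} = n$. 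Combined with $u_1 \le n$ this forces $u_1 = n$, and equality in AM--GM forces all $r_i$ equal, hence $r_i = 1$ for all $i$ and $P(x) = (1+x)^n$. By the equality case of the counting bound, $u_1 = n$ means $\tau$ is discrete, which completes the proof.

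The computation is short, so the real work is choosing the right invariant to compare. The natural obstacle is that real-rootedness by itself, applied through the Newton inequalities of Theorem 1.5 at a single index, only relates $u_1$ to $u_2$, which is not immediately bounded; the maneuver that makes everything collapse is to factor $P$ and exploit that $u_0 = 1$ pins the product of the roots, turning the degree-one coefficient into $\sum_i 1/r_i$ and letting AM--GM deliver $u_1 \ge n$ against the hard combinatorial ceiling $u_1 \le n$. One could run the identical computation at the top end, using $u_{n-1} = \sum_i r_i \ge n$ and the bound $u_{n-1} \le n$, but I prefer the $j=1$ version since $u_1 = n$ yields discreteness directly through union-closure.
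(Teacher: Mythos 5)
Your proof is correct, but it takes a genuinely different route from the paper's. The paper also collides a real-rootedness consequence with the counting ceiling, but its mechanism is Newton's inequalities (its Theorem 1.5): after observing that no coefficient may vanish, it chains the inequalities $\frac{u_j}{u_{j-1}} \ge \frac{j+1}{j}\frac{n-j+1}{n-j}\frac{u_{j+1}}{u_j}$ telescopically from $j=1$ to $j=n-1$, the constants multiplying out to $n^2$, which together with $u_0=u_n=1$ gives $u_1 u_{n-1} \ge n^2$; the bounds $u_1 \le n$ and $u_{n-1} \le n$ then force $u_1=u_{n-1}=n$ and hence discreteness. You instead factor $P(x)=\prod_i(x+r_i)$ over the negative reals, use $u_0=\prod_i r_i=1$ to write $u_1=\sum_i 1/r_i$, and apply AM--GM to get $u_1 \ge n$, needing only the single ceiling $u_1 \le n$. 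Your version buys three things: it is self-contained (the paper quotes Newton's inequalities without proof, and your AM--GM step replaces them entirely), it sidesteps the need to argue that all $u_j \neq 0$ (which the paper must assert so that its ratio-chaining makes sense), and the equality case of AM--GM hands you the exact conclusion $P(x)=(1+x)^n$ along the way. What the paper's approach buys is reusability: the same chaining argument applied to a weaker hypothesis (log-concavity with factor $d$ in place of Newton's inequalities) immediately yields its Theorem 2.3, which your root-factorization argument cannot do since mere coefficient inequalities give no roots to factor.
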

  \begin{proof}
  If the topology is discrete on $X_n$, then  $ \displaystyle \sum_{j=0}^{n}u_jx^j=(1+x)^n. $
  Now, suppose that there exist a topology $\tau$, such that its open  polynomial has only real zeros. Note the following facts: \\
  1)  $ u_0=u_n=1$.\\
  2)  $\displaystyle  0 \leq u_j \leq \binom{n}{j}, \; 1 \leq j \leq   n-1.$\\
   3) All the $u_i$ must be  $ \neq 0$, otherwise the polynomial could not have all its zeros real.\\
   Now,  since the polynomial has only real zeros, its coefficients satisfy Newton inequalities $(1.1)$, which  may be written,
   $$ \frac{u_{j}}{u_{j-1}} \geq \frac{j+1}{j} \frac{n-j+1}{n-j} \frac{u_{j+1}} {u_{j}},\;  \; 1\leq j \leq n-1.$$
    By induction, we deduce  $$ \frac{u_{1}}{u_{0}} \geq n^{2} \frac{u_{n}}{u_{n-1}}.$$
    Remembering that  $u_0=u_n=1$, we obtain
    $$ u_{1}u_{n-1} \geq n^2.$$ But, we also have   $$ u_{1}u_{n-1} \leq n^2. $$
    The $u_k$ are integers, it follows that
    $ u_1=u_{n-1}=n$. This means that the topology is the discrete one.\end{proof}
    \begin{remark}
    In the proof of the last theorem, if we suppose just that the polynomial satisfies inequalities $(1.1)$, we reach the same conclusion,
    that is; the topology is the discrete one. This means that log--concave
    open polynomials are rare, or equivalently, almost the only
    log-concave open set polynomial is that one associated with the
    discrete topology.
    \end{remark} The following result illustrates the previous remark.
    \begin{theorem}
  If  the open set  polynomial $ \displaystyle \sum_{j=0}^{n}u_jx^j $ satisfies $ u_j^2 \geq du_{j+1}u_{j-1}$ for
  $d>1$, and $\tau$ is not the discrete topology
  on $X_n$, then $\displaystyle d< n^{\frac{2}{n-1}}$.
    \end{theorem}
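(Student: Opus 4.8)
The plan is to mirror the structure of the previous theorem's proof, which chained the Newton-type inequalities multiplicatively. Here the hypothesis is the stronger (and $n$-free) bound $u_j^2 \geq d\, u_{j+1} u_{j-1}$ with $d>1$. Rewriting this as a ratio inequality gives
\begin{equation*}
\frac{u_{j}}{u_{j-1}} \geq d\,\frac{u_{j+1}}{u_{j}},\qquad 1\leq j\leq n-1.
\end{equation*}
First I would note, as in Theorem 2.1, that a topology which is not discrete and whose polynomial is log-concave with no internal zeros still has all $u_j\neq 0$, so every ratio $u_j/u_{j-1}$ is well defined and positive; in particular nothing forces a division by zero in the chain.

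Next I would iterate the ratio inequality from $j=1$ up to $j=n-1$. Telescoping the product of the $n-1$ instances yields
\begin{equation*}
\frac{u_{1}}{u_{0}} \geq d^{\,n-1}\,\frac{u_{n}}{u_{n-1}}.
\end{equation*}
Using $u_0=u_n=1$ this collapses to $u_1 u_{n-1} \geq d^{\,n-1}$. This is the exact analogue of the step $u_1 u_{n-1}\geq n^2$ in the proof of Theorem 2.1, but with the $n^2$ replaced by $d^{\,n-1}$ because we now carry a clean factor of $d$ across each of the $n-1$ links rather than the Newton factors $\frac{j+1}{j}\frac{n-j+1}{n-j}$.

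Then I would invoke the combinatorial upper bound $u_1\leq n$ and $u_{n-1}\leq n$ (the open sets of size $1$ and of size $n-1$ are each at most $n$ in number, since they are among the $\binom{n}{1}=\binom{n}{n-1}=n$ subsets of those sizes), giving $u_1 u_{n-1}\leq n^2$. Combining the two bounds produces $d^{\,n-1}\leq n^2$, and taking $(n-1)$-th roots gives $d\leq n^{2/(n-1)}$. To get the strict inequality $d< n^{2/(n-1)}$ claimed in the statement, I would argue that equality would force $u_1=u_{n-1}=n$ together with equality in every link of the telescoped chain, and that $u_1=u_{n-1}=n$ already pins down the topology as the discrete one (as in Theorem 2.1), contradicting the hypothesis that $\tau$ is not discrete.

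The main obstacle I anticipate is precisely this last sharpening from $\leq$ to $<$. The multiplicative telescoping and the size bounds are routine, but justifying that the boundary case $u_1 u_{n-1}=n^2$ can only occur for the discrete topology requires the observation that $u_1=n$ already means every singleton is open, which (by closure under union) forces $\tau=\mathcal{P}(X_n)$. Once that is in hand, the non-discrete hypothesis rules out equality and the strict bound follows; I would make sure the integrality of the $u_k$ and the characterization of the discrete topology via $u_1=n$ are stated cleanly before concluding.
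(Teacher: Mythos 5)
Your proposal follows exactly the paper's own route: rewrite $u_j^2 \geq d\,u_{j+1}u_{j-1}$ as the ratio inequality $\frac{u_j}{u_{j-1}} \geq d\,\frac{u_{j+1}}{u_j}$, telescope over $j=1,\ldots,n-1$ to get $u_1 u_{n-1} \geq d^{\,n-1}$, and combine with $u_0=u_n=1$ and $u_1,u_{n-1}\leq n$ to conclude $d^{\,n-1}\leq n^2$. In fact you go a step further than the paper, whose proof stops at the non-strict bound $d \leq n^{2/(n-1)}$: your equality analysis (equality would force $u_1=u_{n-1}=n$, hence every singleton open, hence $\tau=\mathcal{P}(X_n)$, contradicting non-discreteness) is precisely what is needed to justify the strict inequality $d < n^{2/(n-1)}$ actually claimed in the statement.
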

  \begin{proof}
The inequalities may be written,
   $$ \frac{u_{j}}{u_{j-1}} \geq d \frac{u_{j+1}} {u_{j}},\; {\rm for} \; 1\leq j \leq n-1, $$
    then   $$ \frac{u_{1}}{u_{0}} \geq d^{n-1} \frac{u_{n}}{u_{n-1}}.$$
    Because  $u_0=u_n=1$, and $ \displaystyle n \geq a_1,\; a_{n-1}$, it yields
    $\displaystyle  d^{n-1} \leq n^2,$ or equivalently $$ d \leq  n^{\frac{2}{n-1}}. $$
This means that if $n$ is large enough, the only log--concave open
set polynomial is that of the discrete topology. \end{proof}
\noindent The natural question after this last result is a weak version of Brown's question: \\
  \begin{center} {\it Which topologies have their open set polynomials log-concave or at least
   unimodal ?} \end{center}
 In order to investigate this question, we have to know, in a first time, the topologies for which all the coefficients of the open polynomial
  are non vanishing. Unfortunately, this is not always an obvious fact. In some cases, it is possible to know explicitly the polynomial, in others, the topology
  itself let some information about the coefficients of the open polynomial. Recall this definition from \cite{Ko}
 \begin{definition}
 Let $\tau$ be a topology on the finite set $X$. The nonempty open set $A$, is
  called a minimal open set if
  either  $A \bigcap U= \phi$ or $ A \bigcap U=A $ for every $U \in \tau$.
  \end{definition}
  The following lemma is obvious but important
  \begin{lemma}
  If $\tau$ is a topology on $X_n$, then so is $ \displaystyle
  \tau^{c}= \left \{ U^c  \;  / \; U \in \tau \right
  \}$, called the cotopology of $\tau$.
  \end{lemma}
  \noindent It may happen that $ \tau =\tau^c$. For example, let
  $\displaystyle X_4=\left \{ a, b, c, d \right \}$ and   $
  \displaystyle \tau= \left \{ \phi, \{a, b \}, \{c,d \}, X_4 \right
  \}$, then  $ \tau=\tau^c$.
  We may ask for which topology this is true. The next lemma
  characterizes topologies with this property
\begin{lemma}
  Let  $\tau$ be a topology on $X_n$, then
  $$  \tau =\tau^{c}\Longleftrightarrow \tau \quad  is \;
  induced\; by\; a\; partition $$
  \end{lemma}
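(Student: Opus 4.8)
The plan is to prove the two implications separately, the backward one being routine and the forward one carrying the real content. First I would dispatch the easy direction: if $\tau$ is induced by a partition $\pi=(A_1,\ldots,A_l)$, then every open set is a union of blocks, say $U=\bigcup_{i\in S}A_i$ for some $S\subseteq\{1,\ldots,l\}$. Its complement is then $U^c=\bigcup_{i\notin S}A_i$, again a union of blocks and hence open, so $\tau^c\subseteq\tau$. Since complementation is an involutive bijection of $\mathcal{P}(X_n)$, it maps the finite family $\tau$ bijectively into itself, which upgrades $\tau^c\subseteq\tau$ to $\tau^c=\tau$.

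For the forward direction, suppose $\tau=\tau^c$, so that $\tau$ is closed under complementation (as well as under finite intersection and union, being a topology). Because $X_n$ is finite, each point $x$ possesses a smallest open neighborhood
$$V_x=\bigcap_{x\in U\in\tau}U\ \in\ \tau,$$
characterized by the property that $x\in U\in\tau$ forces $V_x\subseteq U$. The crux of the argument is the symmetry relation
$$y\in V_x\ \Longleftrightarrow\ x\in V_y,$$
and this is exactly where the hypothesis $\tau=\tau^c$ is used decisively. Indeed, if $y\in V_x$ but $x\notin V_y$, then $x\in V_y^c$, and $V_y^c\in\tau$ by closure under complementation; minimality of $V_x$ then gives $V_x\subseteq V_y^c$, so $y\in V_x\subseteq V_y^c$ contradicts $y\in V_y$.

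Granting this symmetry, $y\in V_x$ yields both $V_y\subseteq V_x$ (minimality) and, via $x\in V_y$, also $V_x\subseteq V_y$, hence $V_x=V_y$. Therefore any two of the sets $V_x$ are either equal or disjoint, and since $x\in V_x$ for every $x$, the distinct members of $\{V_x : x\in X_n\}$ form a partition $\pi$ of $X_n$. It then remains to identify $\tau$ with the topology induced by $\pi$: every $U\in\tau$ equals $\bigcup_{x\in U}V_x$ because each $V_x\subseteq U$ by minimality, so $U$ is a union of blocks; conversely each block $V_x$ is open and arbitrary unions of blocks are open, so the partition topology coincides with $\tau$.

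The main obstacle is the symmetry step $y\in V_x\Longleftrightarrow x\in V_y$; once it is in place, the passage to a partition and the recovery of $\tau$ are bookkeeping. This is precisely the point at which $\tau=\tau^c$ must be invoked, and it also explains why a general finite topology can fail the conclusion: in the Sierpi\'nski space the minimal neighborhoods are nested rather than disjoint, and there $\tau\neq\tau^c$.
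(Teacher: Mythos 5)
Your proof is correct, and it takes a route that differs in substance from the paper's. The paper argues with the minimal open sets $A_1,\ldots,A_l$ of $\tau$ (the atoms of Definition 2.4: $A\cap U=\phi$ or $A\cap U=A$ for every open $U$): their pairwise disjointness is automatic from minimality, and the hypothesis $\tau=\tau^c$ enters only to show that they cover $X_n$ --- if $C=X_n\setminus\bigcup_i A_i$ were nonempty, it would be open (complement of an open set) and would produce a further minimal open set, a contradiction. You instead work with the minimal point neighborhoods $V_x$, for which covering is free ($x\in V_x$) and the hypothesis is spent on the symmetry $y\in V_x\Longleftrightarrow x\in V_y$, which then yields disjointness; so the two arguments invoke $\tau=\tau^c$ at dual places, the paper for covering and yours for disjointness. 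Your version buys two things: it avoids relying on the atom machinery (including the tacit fact that every nonempty open set in a finite topology contains a minimal open set, which is what the paper's phrase ``is another minimal open set'' is really appealing to), and it completes the identification --- you check that $\tau$ actually equals the topology induced by the partition (every open set is a union of blocks, every union of blocks is open), a step the paper's proof omits entirely, stopping at ``the $A_i$ form a partition of $X_n$''. The paper's version, in exchange, is shorter and reuses a notion (minimal open sets) that it needs elsewhere anyway.
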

\begin{proof} If $\tau $ is
  induced by a partition, then it is obvious that $\tau^c=\tau$. Now suppose that $\tau^c=\tau$. Consider the minimal open sets $A_i,\; 1 \leq i \leq l$
   of $\tau$. We have for $i\neq j$, $A_i\bigcap A_j= \phi$ and $ \displaystyle X_n=\bigcup _{i=1}^{l}A_i$. If  $ \displaystyle X\neq \bigcup _{i=1}^{l}A_i$, then
   $ \displaystyle \phi \neq X_n-\bigcup _{i=1}^{l}A_i=C=A_{l+1} \in \tau$ is another
   minimal open set. Contradiction. So,
    $ \displaystyle \left ( A_1, \; A_2, \;\ldots, \; A_l \right ) $ is a  partition of $X_n$ \end{proof}
  In the case where $\tau$ is induced by a partition, its open polynomial is given explicitly
  \begin{theorem}
  Let $\tau$ be a topology induced by the partition $\pi$ of type $ \left( \alpha_1,\; \alpha_2,\;\ldots, \; \alpha_l \right ) $, then
  $$ P_{\tau}(x)=\prod_{i=1}^{l}\left (x^{i}+1\right )^{\alpha_i}$$
   \end{theorem}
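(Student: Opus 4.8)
The plan is to identify the open sets of the partition topology explicitly and then recognize the open set polynomial as a subset-sum generating function that factors into the claimed product. Write the underlying partition as $\pi = (A_1, \ldots, A_m)$ with $m = \sum_{i=1}^{l}\alpha_i$ blocks, where exactly $\alpha_i$ of the blocks have cardinality $i$. Since the $A_j$ form a basis consisting of pairwise disjoint sets, every open set of $\tau$ is a union of blocks: for each subset $S \subseteq \{1, \ldots, m\}$ set $U_S = \bigcup_{j \in S} A_j$.

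First I would verify that $S \mapsto U_S$ is a bijection from $\mathcal{P}(\{1,\ldots,m\})$ onto $\tau$. Surjectivity holds because $\tau$ is generated by the basis $\{A_j\}$, so each open set is the union of some of the $A_j$. Injectivity is the step to handle with care: because the blocks are nonempty and pairwise disjoint, one has $A_k \subseteq U_S$ if and only if $k \in S$, so $S$ is recovered from $U_S$ and distinct subsets yield distinct open sets. In particular $\tau$ has exactly $2^m$ elements.

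Next, since the blocks are disjoint, $|U_S| = \sum_{j \in S} |A_j|$. Hence
$$P_{\tau}(x) = \sum_{U \in \tau} x^{|U|} = \sum_{S \subseteq \{1,\ldots,m\}} x^{\sum_{j \in S}|A_j|} = \prod_{j=1}^{m}\left(1 + x^{|A_j|}\right),$$
the final equality being the standard factorization of a subset-sum generating function: expanding the product, each factor contributes either $1$ or $x^{|A_j|}$ according to whether $j \in S$, and the $2^m$ resulting monomials are in bijection with the subsets $S$. Grouping the $\alpha_i$ blocks of common cardinality $i$ then yields $\prod_{i=1}^{l}\left(x^i + 1\right)^{\alpha_i}$, as claimed.

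I expect no serious obstacle here; the only point needing genuine attention is the injectivity of $S \mapsto U_S$, which is exactly where nonemptiness and disjointness of the blocks enter. Alternatively, one could obtain the same formula from the disjoint union structure recalled earlier: the partition topology is the disjoint union of the indiscrete topologies on the blocks $A_j$, each of which has open polynomial $1 + x^{|A_j|}$, and the open polynomial is multiplicative over disjoint unions. This multiplicativity is the polynomial refinement of the cardinality identity $|\tau| = |\tau_{Y_1}|\cdot|\tau_{Y_2}|$ stated above, valid because an open set of size $k$ in a disjoint union splits uniquely as a pair of open pieces whose sizes sum to $k$; I would mention it as a second route but carry out the direct computation as the main argument since it is entirely self-contained.
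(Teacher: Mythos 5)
Your proof is correct and follows essentially the same route as the paper: open sets of a partition topology are exactly the unions of blocks, and counting them by cardinality yields the factorization $\prod_{i}(x^i+1)^{\alpha_i}$. The paper states this identification in one terse sentence, whereas you supply the details it glosses over (the injectivity of $S \mapsto U_S$ and the subset-sum expansion), so your write-up is a fleshed-out version of the same argument.
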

   \begin{proof}
   The base of the topology contains $\alpha_i$ subset of cardinality $i$. So, it follows that the number of open sets of cardinality $j$ is indeed
    the possible combinations of all the different unions of the members of the base,
     this is also the coefficient of the polynomial in the right side of the previous formula. \end{proof}
   For example, the open polynomial associated with  the topology
induced  by the partition $\displaystyle \pi= \left( \{a,\;b
\},\;\{c, \;d\},\;\{e\}\right ) $ is $\displaystyle
P(x)=(x+1)(x^2+1)^2=1+x+2x^2+2x^3+x^4+x^5,$ while the polynomial
associated with the partition $\displaystyle \pi= \left( \{a,\;b
\},\;\{c, \;d,\; e\} \right ) $ is $\displaystyle
P(x)=(x^2+1)(x^3+1)=1+x^2+x^3+x^5.$ In the last example, there are
some missing coefficients. Let the  open polynomial $ \displaystyle
\sum_{j=0}^{n}u_jx^j $, be  associated with a topology induced by a
partition, when have we $u_j\neq 0$ for all $ 0 \leq j \leq n$? It
is obvious that $\alpha_1$ must be $ \geq 1$. So, we will suppose
this condition.
\begin{theorem}
  Let $\tau$ be a topology induced by the partition $\pi$ of type $ \left( \alpha_1 ( \geq 1),\; \alpha_2,\;\ldots, \; \alpha_l \right ) $, then
 the coefficients of  $$ \displaystyle P_{\tau}(x)=\prod_{i=1}^{l}\left (x^{i}+1\right
 )^{\alpha_i}=\sum_{j=0}^{n} u_jx^{j},$$  are given by
 $$ u_m=\sum_{j_1+2j_2+ \ldots +lj_l=m}\binom{\alpha_1}{j_1}\cdots \binom{\alpha_l}{j_l}.$$
 The coefficient $u_m$ is $\neq 0$ if and only if, for every $1 \leq j \leq n-1$,  the equation  $ x_1+2x_2+\ldots
 +lx_l=m$ has a  solution $ \left( j_1,\; j_2,\;\ldots, \; j_l \right ) $ such
 that at least one term  $\binom{\alpha_1}{j_1}\cdots
 \binom{\alpha_j}{j_l}\neq 0.$
   \end{theorem}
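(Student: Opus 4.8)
The plan is to establish the two assertions separately, both resting on the product formula $P_{\tau}(x)=\prod_{i=1}^{l}\left(x^{i}+1\right)^{\alpha_i}$ supplied by the previous theorem.

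First I would derive the closed form for $u_m$ by expanding each factor with the binomial theorem. For each $i$ one has
$$\left(x^{i}+1\right)^{\alpha_i}=\sum_{j_i=0}^{\alpha_i}\binom{\alpha_i}{j_i}x^{ij_i},$$
so the full product becomes a product of $l$ finite sums. Multiplying these out amounts to selecting one term $\binom{\alpha_i}{j_i}x^{ij_i}$ from each factor; such a selection contributes the monomial $\binom{\alpha_1}{j_1}\cdots\binom{\alpha_l}{j_l}\,x^{j_1+2j_2+\cdots+lj_l}$. Collecting all selections whose total exponent equals $m$ gives exactly
$$u_m=\sum_{j_1+2j_2+\cdots+lj_l=m}\binom{\alpha_1}{j_1}\cdots\binom{\alpha_l}{j_l},$$
which is the asserted formula. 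This step is purely a matter of bookkeeping the indices of the expanded product.

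For the nonvanishing criterion I would exploit the positivity of the summands. Every binomial coefficient $\binom{\alpha_i}{j_i}$ is a nonnegative integer, hence each product $\binom{\alpha_1}{j_1}\cdots\binom{\alpha_l}{j_l}$ is nonnegative and \emph{no cancellation} can occur in the sum defining $u_m$. Consequently $u_m=0$ exactly when every term of the sum vanishes, and $u_m\neq 0$ exactly when at least one term is nonzero. Since $\binom{\alpha_i}{j_i}\neq 0$ is equivalent to $0\le j_i\le\alpha_i$, the condition $u_m\neq 0$ translates into the existence of a solution $(j_1,\ldots,j_l)$ of $x_1+2x_2+\cdots+lx_l=m$ with $0\le j_i\le\alpha_i$ for each $i$, i.e. with $\binom{\alpha_1}{j_1}\cdots\binom{\alpha_l}{j_l}\neq 0$, as claimed.

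Both implications being immediate once positivity is noted, I expect no serious obstacle here; the only care needed is in the first part, keeping the exponents $ij_i$ and the ranges $0\le j_i\le\alpha_i$ correctly aligned when collapsing the $l$-fold product into a single series. I would also remark that, because the coefficients are genuinely positive (not merely nonnegative) whenever a valid representation of $m$ exists, the criterion can be rephrased as a solvability question for the linear Diophantine equation $\sum_{i}i\,x_i=m$ under the box constraints $0\le x_i\le\alpha_i$.
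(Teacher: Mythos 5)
Your proof is correct and takes essentially the same approach as the paper: the paper's entire proof is the single sentence that ``the coefficient $u_j$ is obtained by an identification between the left and right expression,'' which is precisely your binomial-expansion-and-collect-exponents argument, spelled out. You go further than the paper on the second assertion: the paper never actually argues the nonvanishing criterion, whereas you supply the key observation that all summands $\binom{\alpha_1}{j_1}\cdots\binom{\alpha_l}{j_l}$ are nonnegative, so no cancellation can occur and $u_m\neq 0$ is equivalent to the solvability of $\sum_i i\,x_i=m$ under the constraints $0\le x_i\le\alpha_i$ --- a detail the statement needs and the paper leaves implicit.
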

   \begin{proof}
   The coefficient $u_j$ is obtained by an identification between
   the left and right expression.
   \end{proof}
   Our last result in this section, gives the maximum
   cardinality of a topology lacking an open set of a prescribed
   cardinality.
\begin{theorem}
  Let $\tau$ be a topology on $X_n$. If $\tau$ has no open set with cardinality $j, \; 1\leq j\leq
  n-1$, the cardinality of this topology can not exceed
   $ \displaystyle 2^{j-1}+2^{n-j-1}$
   \end{theorem}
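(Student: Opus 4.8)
The plan is to set up the two families of open sets cut out by the forbidden cardinality and to recast the problem as an extremal statement that is stable under the natural inductive operation. First I would record that cardinality is a strictly increasing modular valuation on the distributive lattice $\tau$: since $|U\cup V|+|U\cap V|=|U|+|V|$, the sets $D=\{U\in\tau:|U|\le j-1\}$ and $E=\{U\in\tau:|U|\ge j+1\}$ are respectively a down-set and an up-set with $\tau=D\sqcup E$, a clean cut with no member of cardinality $j$, so $|\tau|=|D|+|E|$. Applying the cotopology construction $\tau^{c}=\{U^{c}:U\in\tau\}$ (which avoids cardinality $n-j$ and interchanges $D$ with the complements of $E$) makes the target symmetric under $j\leftrightarrow n-j$, as the bound itself is. The number $2^{j-1}+2^{n-j-1}$ is realised by the topology $\{U:U\subseteq A\}\cup\{U:U\supseteq B\}$ with $|A|=j-1$ and $|B|=j+1$: it is a topology avoiding cardinality $j$, and since no set is simultaneously $\subseteq A$ and $\supseteq B$ (because $|B|>|A|$) it has exactly $2^{j-1}+2^{n-j-1}$ open sets, so the inequality is sharp.

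Because removing a single clopen point merges two neighbouring cardinalities, I would prove the stronger block version: a topology on $X_m$ with no open set whose cardinality lies in $\{a,\dots,b\}$ (with $1\le a\le b\le m-1$) has at most $2^{a-1}+2^{m-b-1}$ open sets, the desired statement being $a=b=j$. The induction is on $m$. If $\tau$ has a clopen point $p$, then $X$ is the disjoint union of $\{p\}$ and $X\setminus\{p\}$, so by the multiplicativity $|\tau|=2\,|\tau_{0}|$; moreover $\tau_{0}$ lives on $m-1$ points and, since both $|V|$ and $|V|+1$ must avoid the block, it omits the wider block $\{a-1,\dots,b\}$. The inductive bound for $\tau_{0}$ then doubles to exactly $2^{a-1}+2^{m-b-1}$, closing this case; note that when $a=1$ no clopen point can exist, as a clopen point is an open singleton of cardinality $1$.

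The essential case is therefore that $\tau$ has no clopen point. Here I would pass to the connected components, which are clopen, and use $|\tau|=\prod_i|\tau_{C_i}|$ together with the fact that cardinalities add across components, so that the forbidden block becomes the condition that the component cardinality-sets are sum-free over $\{a,\dots,b\}$. This is where the real difficulty lies: the connected extremal example $\{U\subseteq A\}\cup\{U\supseteq B\}$ with $A\subseteq B$ already meets the bound, so a connected factor cannot be dismissed by size alone. I would attempt a connected factor by removing a closed point $p$, writing $|\tau|=|\tau_{0}|+|\{U\in\tau:p\in U\}|$, where $\tau_{0}$ is the inherited block-avoiding subspace on $X\setminus\{p\}$ and the second term is the up-set of the minimal open neighbourhood $U_{p}$; the aim is to choose $p$ so that this up-set is small enough (at most $2^{m-b-2}$ in the base case) for the two pieces to recombine to the bound.

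The step I expect to be the main obstacle is exactly the control of the second term $|\{U:p\in U\}|$ — equivalently, the interaction of the light family $D$ and the heavy family $E$ across the cardinality-$j$ gap. The naive hope that $|D|\le 2^{j-1}$ and $|E|\le 2^{n-j-1}$ \emph{separately} is false: for the partition topology into blocks of size $2$ one already has more than $2^{j-1}$ light sets, so the bound is a genuine global trade-off rather than a fibrewise one. The most promising route past this is a compression/exchange argument transforming $\tau$ into the extremal family $\{U\subseteq A\}\cup\{U\supseteq B\}$ without decreasing $|\tau|$ and without creating an open set of the forbidden cardinality; defining such a compression on a sublattice of $2^{X}$ while preserving the clean cut is the technical heart that I would need to supply.
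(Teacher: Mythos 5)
Your proposal is, by your own admission, not a proof: all the weight rests on the connected (no clopen point) case, and there you only name a hoped-for compression/exchange argument without constructing it. Concretely, in your closed-point decomposition $|\tau|=|\tau_0|+|\{U\in\tau : p\in U\}|$, the second term is bounded by $2^{m-|U_p|}$ where $U_p$ is the minimal open set containing $p$, and this is at most the needed $2^{m-b-2}$ only when $|U_p|\ge b+2$; nothing guarantees a closed point with such a large minimal neighbourhood exists (in the partition topology into blocks of size $2$, every minimal neighbourhood has cardinality $2$), so the two pieces do not recombine and the induction does not close. The sharpness construction and the clopen-point reduction are correct, but they carry none of the load: the extremal inequality itself is exactly what remains unproved.

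That said, your diagnosis of where the difficulty sits is accurate, and it in fact cuts against the paper's own argument. The paper's proof is a two-step count: $\tau$ has at most $j-1$ open singletons, \emph{hence} at most $2^{j-1}$ open sets of cardinality $\le j-1$, plus at most $2^{n-j-1}$ open sets of cardinality $\ge j+1$. The inference from "few open singletons" to "few small open sets" presumes small open sets are unions of open singletons, and your counterexample refutes it: for the partition topology into four blocks of size $2$ on $X_8$, which avoids cardinality $j=3$, the open sets of cardinality $\le 2$ are $\emptyset$ and the four blocks, i.e., five sets $>2^{j-1}=4$ (dually, for $j=5$ the heavy family has five members $>2^{n-j-1}=4$). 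So the fibrewise splitting $|D|\le 2^{j-1}$, $|E|\le 2^{n-j-1}$ is false, the bound is a global trade-off as you say, and the step you decline to take on faith is precisely the step the paper takes without justification. In short: you have correctly located the gap, but you have not filled it; what you submitted proves the theorem only in the clopen-reducible cases, and the main extremal statement --- for $\tau$ with no clopen point --- still needs an argument.
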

\begin{proof}
Let $\tau$ be a topology on $X_n$,  having no open set of
cardinality $j, \; 1 \leq j \leq n-1$. The topology $\tau$  will
have at most $j-1$ singletons. So, $2^{j-1}$ open sets of
cardinality $\leq j-1$. Now take all sets of cardinality $\geq j+1$,
containing the set of cardinality $j-1$. Their number is $
2^{n-j-1}$. The total number of open sets of this topology is as
claimed, $ \displaystyle 2^{j-1}+2^{n-j-1}$.
\end{proof}
As a consequence we have the
\begin{corollary} If the topology $ \tau$ exists and is such that $|\tau| \geq
2^{n-2}+2$, then all the coefficients of its open polynomial are non
vanishing.
\end{corollary}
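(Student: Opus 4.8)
The plan is to prove the contrapositive and reduce everything to the preceding theorem. Suppose the open polynomial $\sum_{j=0}^{n} u_j x^j$ of $\tau$ has a vanishing coefficient. Since $\phi$ and $X_n$ are always open, we have $u_0 = u_n = 1$, so any index with $u_j = 0$ must satisfy $1 \le j \le n-1$. Fixing such a $j$, the topology $\tau$ has no open set of cardinality $j$, and the previous theorem immediately yields
$$ |\tau| \le 2^{j-1} + 2^{n-j-1}. $$
It therefore suffices to show that this upper bound can never reach $2^{n-2}+2$.

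The key step is the elementary estimate that the right-hand side is maximized, over the admissible range $1 \le j \le n-1$, at the two endpoints. I would verify this through the factorization
$$ \left(2^{n-2}+1\right) - \left(2^{j-1} + 2^{n-j-1}\right) = \left(2^{j-1}-1\right)\left(2^{n-j-1}-1\right), $$
which is valid because $(j-1)+(n-j-1) = n-2$. For $1 \le j \le n-1$ both factors on the right are non-negative, so $2^{j-1} + 2^{n-j-1} \le 2^{n-2}+1$, with equality exactly when $j = 1$ or $j = n-1$.

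Combining the two displays, whenever some coefficient $u_j$ vanishes we obtain $|\tau| \le 2^{n-2}+1$. Contrapositively, if $|\tau| \ge 2^{n-2}+2$, then no $u_j$ with $1 \le j \le n-1$ can be zero; together with $u_0 = u_n = 1$ this shows that all coefficients of the open polynomial are non-vanishing, as claimed.

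I do not expect a genuine obstacle here: once the preceding theorem is granted, the whole argument is a one-line application followed by the bound above. The only point that needs a moment of care is the maximization of $2^{j-1}+2^{n-j-1}$ over the discrete range, and the displayed factorization disposes of it cleanly without appealing to calculus or convexity.
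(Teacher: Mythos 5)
Your proof is correct and follows essentially the route the paper intends: the paper states this corollary as an immediate consequence of the preceding theorem, leaving the maximization of $2^{j-1}+2^{n-j-1}$ implicit, which is exactly what your argument supplies. Your factorization $\left(2^{n-2}+1\right)-\left(2^{j-1}+2^{n-j-1}\right)=\left(2^{j-1}-1\right)\left(2^{n-j-1}-1\right)$ is a clean way to dispose of that step, and the rest matches the paper's reasoning.
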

Unfortunately, even if the topology has a large number of open sets,
this does not guarantee the unimodality!!! For example, consider the
topology $\tau$ on $X_n$
$$\tau=  \mathcal{P}(X_{n-2}) \bigcup  \{ \{x_1, x_2, \;\ldots, \; x_{n-2},\; x_{n-1}\},\; \{x_1, x_2, \;\ldots, \; x_{n-2}, \; x_{n}\}, \; X_n \}$$
We have $ \displaystyle |\tau|=2^{n-2}+3$, and its open polynomial
is $$ P(x)=(x+1)^{n-2}+2x^{n-1}+x^n$$
is not unimodal. \\
 In conclusion of this section, we list some
cases where the coefficients of the polynomial $ \displaystyle
\sum_{j=0}^{n}u_jx^j$ are non vanishing . \\
1) If the topology $ \tau$ is $T_0$. \\
2) If the topology is induced by a partition, and  the number of
open sets, which are singletons is large enough
(for example $ \displaystyle \geq \frac{n}{2}$). \\
3) If $ \tau$ is a topology such that $ |\tau| \geq 2^{n-2}+2$.\\
4) It is easy to see that if the topology $\tau$ is $T_0$ and $
|\tau|=n+1,$ or $ n+2$, then the sequence is unimodal. Since in this
case, we have either a
chain or a topology with the diagram as above.\\
 Finally, note that
if   the $ |\tau| \leq n$, then the polynomial can not be unimodal,
there is at least $0<j_0<n$, such that $u_{j_0}=0$.
\begin{center}
$$ \unitlength=1truemm
\begin{picture}(40,40)(25,-13)
 \put(39.25,0){$\bullet$}
 \put(40,1){\line(-1,1){10}}
\put(29.25,10){$\bullet$} 
 \put(30,11){\line(1,1){10}}
 \put(39.25,20){$\bullet$}
 \put(39,23){$X_n$}
 \put(40,21){\line(1,-1){10}}
 \put(49.25,10){$\bullet$}
\put(40,1){\line(1,1){10}}
 \put(39.8,-15){\vdots}
  \put(39.8,-20){\vdots}
 \put(39,-39){$\emptyset$}
\put(39.25,-10){$\bullet$}
 \put(39.2,-28){$\bullet$}
 \put(39.2,-21){$\bullet$}
\put(40,-35){\line(0,1){15}}
 \put(40,-9){\line(0,1){10}}
\put(39.25,-36){$\bullet$}
 \put(40,-35){\line(0,1){15}}
\par\vspace{3cm}
 \put(30,-43){Figure 1}
\end{picture}
$$
\end{center}
\par\vspace{2.5cm}

\section{A Class of unimodal open polynomials}

Intuitively, the open set polynomial will be unimodal, if the
topology has a large number of open sets (so, a large number of open
sets which are singletons, this means that the topology looks like
the discrete one). In this section, we will determine explicitly the
open set polynomials of a class of topologies having a large number
of open sets ($\tau$ such that $|\tau| \geq 6\cdot 2^{n-4} $ and
another class of topologies $\tau$ such that $|\tau| \geq 5\cdot
2^{n-4} $). We will also examine the unimodality of
these polynomials.\\
{\bf Convention} \\
The symbol $X_k$ will designate any finite set of cardinality $k$.
If we consider a subset of $X_n$ of cardinality $k \leq n-1$, the
elements of $X_n \setminus X_k$ will be designated by $a, b, c,...$,
or $ y_1,\; y_2, \; ...$ if the set is large,
 those of $X_k$ are denoted by $x_1,\; x_2,\; x_3, \;...$ \\
 Consider the set $ \tau(n,k)$ of the
topologies having $k$ open sets. In order to compute its
cardinality, Kolli \cite{Ko} divided this set into two
  disjoint sets: \begin{center} $\displaystyle  \tau_1(n,k)=\{ \tau\in \tau(n,k): \; \bigcap_{U  \in \tau  }U \ne \phi, \; U \;{\rm nonempty \; set} \} \quad {\rm
  and} \quad \tau_2(n,k)=\tau(n,k)-\tau_1(n,k).$ \end{center}
   For $ \tau \in \tau_1(n,k)$ and $ k \geq
  5.2^{n-4}$, all  open set polynomials are given in the following theorem.
\begin{theorem}
 Let  $\displaystyle \tau  \in \tau_1(n,k) $ with
 $ k \geq 5\cdot2^{n-4}$ open sets. Then the open polynomial of $\tau$ is
 one of the three
\begin{eqnarray*}
       P_1(x) & =& x(x+1)^{n-1}+1  \\
       P_2(x) &= & x^2(x+1)^{n-2}+x(x+1)^{n-3}+1 \\
       P_3(x) & = & x^3(x+1)^{n-3}+2x^2(1+x)^{n-4}+x(1+x)^{n-4}+1. \\
     \end{eqnarray*}
   \end{theorem}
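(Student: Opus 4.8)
The plan is to reduce the problem to a counting question about finite posets, via the minimal open set guaranteed by membership in $\tau_1(n,k)$, and then to prove a sharp ``gap'' statement showing that near the maximum only three cardinalities can occur.

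First I would exploit the defining property of $\tau_1(n,k)$. Since $\tau\in\tau_1(n,k)$, the set $M=\bigcap_{U\in\tau,\,U\neq\phi}U$ is a nonempty open set (a finite intersection of open sets) contained in every nonempty open set; let $m=|M|\ge 1$. Every nonempty open set has the form $M\cup F$ with $F\subseteq X_n\setminus M$, and the map $U\mapsto U\setminus M$ is a bijection between the nonempty open sets of $\tau$ and a family $\mathcal F$ of subsets of $X_n\setminus M$ that is closed under union and intersection and contains $\phi$ and $X_n\setminus M$; that is, $\mathcal F$ is a topology on the $(n-m)$-element set $X_n\setminus M$, with $|\mathcal F|=k-1$. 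Tracking cardinalities ($|U|=m+|U\setminus M|$) gives $P_\tau(x)=1+x^{m}Q(x)$, where $Q$ is the open polynomial of $\mathcal F$. Because any topology on $n-m$ points has at most $2^{\,n-m}$ open sets while $|\mathcal F|=k-1\ge 5\cdot 2^{n-4}-1>2^{\,n-2}$ for $n\ge 5$, we are forced to have $m=1$. Thus $P_\tau(x)=1+xQ(x)$, and the whole problem becomes: classify the topologies $\mathcal F$ on $N:=n-1$ points with $|\mathcal F|\ge 5\cdot 2^{N-3}-1$.

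Next I would pass to posets. A finite topology corresponds to a preorder on its points, the open sets being the up-sets. If $\mathcal F$ is not $T_0$ it has two topologically indistinguishable points, whence every open set contains both or neither and $|\mathcal F|\le 2^{N-1}=4\cdot 2^{N-3}$, below the threshold; so $\mathcal F$ is $T_0$, i.e. a poset $P$, and $|\mathcal F|=i(P)$, the number of its order ideals. Now comes the key estimate. Each comparable pair $a<b$ forbids the ideals containing $b$ but not $a$, a pattern met by one quarter of all subsets; hence a single relation gives $i(P)\le\tfrac34\,2^{N}=6\cdot 2^{N-3}$, with equality exactly when $P$ has one comparable pair (giving $Q=(x+1)^{N-2}(x^2+x+1)$), while an antichain gives $i(P)=2^{N}=8\cdot2^{N-3}$ (giving $Q=(x+1)^{N}$). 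For $P$ with at least two comparable pairs I would analyse how two chosen comparable pairs meet: two disjoint pairs force $i(P)\le\tfrac{9}{16}2^{N}=4.5\cdot2^{N-3}$; two pairs sharing a common extreme element form a ``$V$'' or ``$\Lambda$'' and force $i(P)\le 5\cdot 2^{N-3}$, with equality exactly when $P$ is such a configuration plus isolated points (giving $Q=(x+1)^{N-3}(x^3+x^2+2x+1)$); and two pairs sharing an element in chain position force $i(P)\le 4\cdot 2^{N-3}$.

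To finish, I would note that a family of comparable pairs that pairwise intersect is either a star (a fan at one vertex, all relations in the same direction by transitivity) or a triangle (a $3$-chain): fans on $s+1$ points give $(1+2^{s})2^{\,N-s-1}$ ideals, which equals $5\cdot2^{N-3}$ when $s=2$ and is at most $4.5\cdot 2^{N-3}$ when $s\ge 3$, while a $3$-chain gives $4\cdot 2^{N-3}$. Combined with the disjoint case, this shows every poset with at least two comparable pairs satisfies $i(P)\le 5\cdot 2^{N-3}$, and that the only value strictly below $5\cdot 2^{N-3}$ attainable here is at most $4.5\cdot 2^{N-3}$. Since $4.5\cdot2^{N-3}<5\cdot2^{N-3}-1$ for $N\ge 4$, the attainable values of $|\mathcal F|$ in the range $[\,5\cdot 2^{N-3}-1,\,2^{N}\,]$ are exactly $5\cdot2^{N-3}$, $6\cdot2^{N-3}$ and $8\cdot 2^{N-3}$, realised by the $V/\Lambda$, one-relation and discrete topologies. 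Substituting the three corresponding $Q$'s into $P_\tau(x)=1+xQ(x)$ and simplifying (e.g. $x^2+x+1=x(x+1)+1$ and $x^3+x^2+2x+1=x^2(x+1)+2x+1$) yields precisely $P_1,P_2,P_3$. The main obstacle is this last gap argument: one must organise the case analysis on pairs of comparable elements so as to certify that no topology has an open-set count strictly between $4.5\cdot 2^{N-3}$ and $5\cdot 2^{N-3}$ (in particular none equal to $5\cdot 2^{N-3}-1$), which is exactly what pins the threshold at $5\cdot 2^{n-4}$.
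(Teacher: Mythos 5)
Your reduction strategy is sound and, unlike the paper's proof (which simply quotes Kolli's classification of $\tau_1(n,k)$ for $k\ge 5\cdot 2^{n-4}$ and converts it into polynomials via the cotopology/reciprocal trick), it is self-contained: the common-point quotient giving $P_\tau(x)=1+xQ(x)$, the exclusion of non-$T_0$ quotients, and the ideal-counting case analysis isolating the antichain, the single comparable pair, and the two-pair configurations are all correct as far as the \emph{cardinality} conclusions go.

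However, there is a genuine error at the decisive identification step. The two equality configurations you lump together as ``$V$ or $\Lambda$'' do both have $5\cdot 2^{N-3}$ ideals, but they have \emph{different} ideal-size distributions, hence different $Q$: one gives $(x+1)^{N-3}(x^3+x^2+2x+1)$, the other gives $(x+1)^{N-3}(x^3+2x^2+x+1)$. Your proof assigns the first polynomial to both. Lifting the second gives $1+x(x+1)^{n-4}(x^3+2x^2+x+1)$, a fourth polynomial distinct from $P_1,P_2,P_3$ for every $n\ge 5$. Concretely, for $n=5$ take $X_5=\{p,x,a,b,z\}$ and
$$\tau=\{\phi\}\cup\bigl\{\{p\}\cup U\cup S:\ U\in\{\phi,\{x\},\{x,a\},\{x,b\},\{x,a,b\}\},\ S\subseteq\{z\}\bigr\};$$
this is a topology with $11\ge 5\cdot 2^{5-4}$ open sets, it lies in $\tau_1$ since every nonempty open set contains $p$, and its open polynomial $1+x+2x^2+3x^3+3x^4+x^5$ equals none of $P_1=1+x+4x^2+6x^3+4x^4+x^5$, $P_2=1+x+3x^2+4x^3+3x^4+x^5$, $P_3=1+x+3x^2+3x^3+2x^4+x^5$. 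So, carried out honestly, your method does not prove the stated theorem --- it refutes it: only one of the two dual configurations reproduces $P_3$, and the other produces a polynomial missing from the statement. (This exposes a defect in the theorem itself, i.e., in the paper's reading of Kolli's list, which omits the dual of its third topology; but it does not rescue your proof of the statement as given.) Two smaller slips in the same direction: the inequality $4.5\cdot 2^{N-3}<5\cdot 2^{N-3}-1$ requires $N\ge 5$, not $N\ge 4$ --- at $N=4$ (i.e., $n=5$) the poset with two disjoint comparable pairs also clears the threshold and yields yet another polynomial, $1+x+2x^2+3x^3+2x^4+x^5$ --- and the forcing of $m=1$ in your first step likewise needs $n\ge 5$.
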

\begin{proof}
If $\displaystyle \tau  \in \tau_1(n,k) $  and $ k \geq
5\cdot2^{n-4}$, then $\tau$ is necessarily one of
the three kind of the following topologies (see \cite{Ko}): \\
1) The cotopology of $\tau= \mathcal{P}(X_{n-1}) \bigcup  \{X_n\} $, whose open polynomial is  $ (x+1)^{n-1}+x^n$. \\
 2) The cotopology of  $\tau= \mathcal{P}(X_{n-2})\bigcup \{ \{ a, x\},\; X_n\} $, its open polynomial
 is $$ x^2(x+1)^{n-3}+(x+1)^{n-2}+x^n. $$
 3) The cotopology of  $\tau= \mathcal{P}(X_{n-3})\bigcup\{  \{ a, x\}, \{ b,
 x\}, \;  X_n\}.$  Its open polynomial is
 $$x^3(x+1)^{n-3}+2x^2(1+x)^{n-4}+x^3(1+x)^{n-4}+x^n.$$
 Note that if the open polynomial of the topology $\tau$ is $ \displaystyle \sum_{j=0}^{n}u_jx^j $, then
 $ \displaystyle \sum_{j=0}^{n}u_{n-j}x^j $ (its reciprocal), is the open polynomial of
 $\tau^{c}$. Furthermore, the wanted
 polynomials are just the reciprocal of these last ones:
$$P_1(x)=x(x+1)^{n-1}+1, \quad P_2(x)= x^2(x+1)^{n-2}+x(x+1)^{n-3}+1,$$
and
$$P_3(x)=x^3(x+1)^{n-3}+2x^2(1+x)^{n-4}+x(1+x)^{n-4}+1.$$
 \end{proof}
 \begin{corollary}
The polynomials $P_1, \; P_2 \;P_3$ are unimodal for every
  $n \geq 3$, but not log-concave.
\end{corollary}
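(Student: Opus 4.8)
The plan is to handle each $P_i$ by peeling off the constant term, factoring what remains as a product of a power of $(1+x)$ with a short fixed polynomial, applying the convolution theorem of Section~1 to that factorization, and only then re-attaching the constant. First I would record the factorizations
\[
P_1(x)-1=x(1+x)^{n-1},\qquad
P_2(x)-1=x(1+x)^{n-3}(1+x+x^2),
\]
\[
P_3(x)-1=x(1+x)^{n-4}(1+2x+x^2+x^3),
\]
each obtained by collecting all terms of $P_i$ other than the $1$ and extracting the common factor $x(1+x)^{m}$. For the third I would check $x^3(1+x)^{n-3}+2x^2(1+x)^{n-4}+x(1+x)^{n-4}=x(1+x)^{n-4}\bigl(x^2(1+x)+2x+1\bigr)$ and note $x^2(1+x)+2x+1=1+2x+x^2+x^3$.

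Next I would classify the short factors. The binomial polynomial $(1+x)^{m}$ is symmetric and log-concave, and $1+x+x^2$ is log-concave; while $1+2x+x^2+x^3$, with coefficient sequence $1,2,1,1$, is unimodal but \emph{not} log-concave (at the third coefficient $1^2<2\cdot1$). By the convolution theorem, the product $(1+x)^{n-3}(1+x+x^2)$ is therefore log-concave and the product $(1+x)^{n-4}(1+2x+x^2+x^3)$ is unimodal; moreover, since every factor has strictly positive coefficients, each product has no internal zeros. Multiplying by $x$ merely shifts the index, so $P_1(x)-1$, $P_2(x)-1$ and $P_3(x)-1$ all have positive, (NIZ) unimodal coefficient sequences supported on $\{1,\dots,n\}$.

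The step I expect to require the most care is re-attaching the $+1$, since adding a constant to the lowest coefficient need not preserve unimodality in general. The saving fact is that in each case the coefficient of $x^1$ in $P_i(x)-1$ equals $1$: it is the product of the constant terms of the factors, all of which are $1$. Hence after restoring $u_0=1$ we have $u_0=u_1=1$, and because the tail $u_1\le u_2\le\cdots$ rises to the mode, prepending the equal value $u_0=1$ leaves the whole sequence $u_0,u_1,\dots,u_n$ unimodal. This yields unimodality of $P_1,P_2,P_3$ for all admissible $n$ ($n\ge3$ for $P_1,P_2$, and $n\ge4$ for $P_3$, where $(1+x)^{n-4}$ is a genuine polynomial).

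Finally, for the failure of log-concavity I would simply test $u_1^2\ge u_0u_2$ at $j=1$. Since $u_0=u_1=1$ we have $u_1^2=1$, whereas $u_2$ is the coefficient of $x^1$ in the relevant product, namely $n-1$ for $P_1$ and $n-2$ for $P_2$ and $P_3$; as soon as $u_2\ge2$ the inequality $u_1^2\ge u_0u_2$ is violated. Thus the three polynomials are non-log-concave for all the admissible $n$, the sole boundary coincidence being $P_2$ at $n=3$, where $P_2(x)=1+x+x^2+x^3$ is (trivially) log-concave. The main obstacle here is not any hard estimate but the bookkeeping at the bottom of the sequence: confirming the NIZ property of the products and checking that the prepended constant matches $u_1$ exactly, which is precisely what makes unimodality survive the addition of the $+1$.
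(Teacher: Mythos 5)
Your proof is correct, and it is worth noting that the paper itself offers no proof of this corollary at all: it is stated as an immediate consequence of Theorem 3.1, so there is no argument to compare yours against line by line. What you have written supplies the missing details using exactly the paper's own toolkit (Theorem 1.2 on convolutions), in the same spirit as the paper's later proof of Theorem 3.3, where $P_l(x)=(x+1)^{n-l-1}\left(x^{l+1}+(1+x)^{l}\right)$ is declared unimodal as the convolution of a log-concave and a unimodal sequence. Your factorizations $P_1(x)-1=x(1+x)^{n-1}$, $P_2(x)-1=x(1+x)^{n-3}(1+x+x^2)$, $P_3(x)-1=x(1+x)^{n-4}(1+2x+x^2+x^3)$ are right; the classification of the short factors (the last one unimodal but not log-concave, since $1^2<2\cdot 1$) is right; and the step where you re-attach the constant term, using the fact that $u_0=u_1=1$ in each case, is precisely the point the paper glosses over --- without it, unimodality of $P_i(x)-1$ does not formally transfer to $P_i(x)$, since adding to the bottom coefficient of a unimodal sequence can in general destroy unimodality. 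Your disproof of log-concavity at $j=1$ ($u_1^2=1<u_2$) is also the natural one. Finally, you correctly flag two boundary defects in the corollary as printed: $P_3$ is only a polynomial for $n\ge 4$, and $P_2$ at $n=3$ equals $1+x+x^2+x^3$, which \emph{is} log-concave, so the ``not log-concave'' assertion needs $n\ge 4$ in that case. These are minor inaccuracies in the paper's statement, not gaps in your argument.
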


The remaining of this section is devoted to the determination of the
polynomials of the topologies in the set $\displaystyle  \tau_2(n,k)
$ and $ k \geq 6\cdot2^{n-4}$. Kolli \cite{Ko} computed these
topologies according to the number and cardinal  of their minimal
open sets. We use these details to see the geometry of the topology,
then to determine $u_j$, then its  open polynomial. We start by the
topologies having $(n-1)$ minimal open sets.
\begin{theorem}
If $\tau$ is a topology having  $(n-1)$  minimal open sets, at least one is not a singleton, then: \\
Either  $\tau$ is induced by a partition, thus its open set
polynomial is
 $$ P_1(x)  = (1+x^2)(x+1)^{n-2}.$$
\noindent Or  $\tau=\mathcal{P}(X_{n-3})\bigsqcup \{ \phi, \;
\{a,b\},\; \{a, b, c\} \}$   (the disjoint union topology) and its
open polynomial is given by
$$ P_2(x)=(1+x^2+x^3)(1+x)^{n-3}. $$
The polynomials$ P_1, \; P_2$  are log-concave.\\
 If the minimal open sets
of $\tau $ are all singletons, then all the open polynomials
associated with these topologies are given by
      $$ P_l(x)  = (x+1)^{n-1}+x^{l+1}(1+x)^{n-l-1}, \; l=1,\;\ldots, n-1. $$
 These polynomials are unimodal for all $1 \leq l \leq n-1$.
   \end{theorem}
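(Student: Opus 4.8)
The plan is to combine the structural description of the topologies on $X_n$ with $(n-1)$ minimal open sets (the enumeration of which we take from \cite{Ko}, exactly as the earlier theorems of this section do) with three separate coefficient computations, and only afterwards to treat log-concavity and unimodality. First I would record the elementary facts that govern the geometry: distinct minimal open sets are pairwise disjoint (applying the defining property of a minimal open set $A$ to the open set $U=B$, and symmetrically, forces $A\subseteq B$ and $B\subseteq A$, hence $A=B$ whenever they meet), every open singleton is automatically a minimal open set, and in a finite topology every open set is a union of the minimal neighborhoods $U_z=\bigcap\{U\in\tau:\ z\in U\}$. From disjointness and a size count, $(n-1)$ minimal open sets with at least one non-singleton must consist of a single doubleton together with $(n-2)$ singletons; reading off which minimal neighborhoods generate $\tau$ then pins the topology down to the three shapes in the statement: the partition topology, the flag-type disjoint union $\mathcal{P}(X_{n-3})\sqcup\{\phi,\{a,b\},\{a,b,c\}\}$, and the all-singleton family.

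Next I would compute the three open polynomials. For the partition case I would simply invoke the product formula $P_{\tau}(x)=\prod_i(x^{i}+1)^{\alpha_i}$ with type $(\alpha_1,\alpha_2)=(n-2,1)$, giving $P_1(x)=(1+x^2)(1+x)^{n-2}$. For the disjoint-union case I would use that the open polynomial is multiplicative under disjoint union (an open set of $Z=Y_1\sqcup Y_2$ is $U_1\cup U_2$ with $U_i\in\tau_{Y_i}$, and cardinalities add), so from the factors $(1+x)^{n-3}$ and $1+x^2+x^3$ one gets $P_2(x)=(1+x)^{n-3}(1+x^2+x^3)$. For the all-singleton family I would argue structurally: with open singletons $\{x_1\},\dots,\{x_{n-1}\}$, all of $\mathcal{P}(\{x_1,\dots,x_{n-1}\})$ is open, while the remaining point $y$ has a minimal neighborhood $V_y=\{y\}\cup S$ with $S\subseteq\{x_1,\dots,x_{n-1}\}$ and $|S|=l\in\{1,\dots,n-1\}$; hence $\tau$ consists of the subsets of $\{x_1,\dots,x_{n-1}\}$ together with the sets $V_y\cup U$. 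Counting by cardinality gives $(1+x)^{n-1}$ from the first family and $x^{l+1}(1+x)^{n-l-1}$ from the second, so $P_l(x)=(1+x)^{n-1}+x^{l+1}(1+x)^{n-l-1}$.

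For the unimodality of the family $P_l$ I would exploit the factorization
$$P_l(x)=(1+x)^{n-l-1}\bigl[(1+x)^l+x^{l+1}\bigr].$$
The bracket $Q_l(x)=(1+x)^l+x^{l+1}$ has coefficient sequence $\binom{l}{0},\dots,\binom{l}{l}$ with a final $1$ appended; it rises to the central binomial coefficient, descends to $\binom{l}{l}=1$ at position $l$, and remains $1$ at position $l+1$, so it is unimodal with no internal zeros. Since $(1+x)^{n-l-1}$ is log-concave, the quoted result that the convolution of a unimodal sequence with a log-concave one is unimodal yields at once that $P_l$ is unimodal for every $1\le l\le n-1$, the extreme values $l=1$ and $l=n-1$ being included with no special treatment.

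The main obstacle is the log-concavity of $P_1$ and $P_2$. Here the clean convolution route is unavailable: the factors $1+x^2$ and $1+x^2+x^3$ have coefficient sequences $(1,0,1)$ and $(1,0,1,1)$, which are \emph{not} log-concave (they carry internal zeros), and the non-real roots $\pm i$, respectively the complex roots of $1+x^2+x^3$, preclude any appeal to real-rootedness. I would therefore verify $u_m^2\ge u_{m-1}u_{m+1}$ directly from the explicit coefficients $u_m=\binom{n-2}{m}+\binom{n-2}{m-2}$ and $u_m=\binom{n-3}{m}+\binom{n-3}{m-2}+\binom{n-3}{m-3}$; after expanding, the log-concavity of the binomials cancels the diagonal terms and the inequality collapses to an elementary estimate in $m$ and $n$, which I expect to hold once $n$ exceeds a small threshold (the finitely many small configurations being checked by hand). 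Obtaining these cross-term inequalities cleanly, rather than through a long case analysis on the location of $m$ relative to the peak, is the step I anticipate will require the most care.
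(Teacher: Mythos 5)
Your structure determination, the three polynomial computations, and the unimodality argument for $P_l$ all coincide with the paper's own proof (the same factorization $P_l(x)=(1+x)^{n-l-1}\left(x^{l+1}+(1+x)^{l}\right)$ and the same appeal to the convolution of a unimodal sequence with a log-concave NIZ sequence). The genuine gap is the log-concavity of $P_1$ and $P_2$, which you leave as an expected-but-unverified binomial manipulation. The idea you are missing is that the convolution route you declare ``unavailable'' can be rescued by regrouping binomial factors so as to absorb the internal zeros of the bad factor. Indeed
$$(1+x^2)(1+x)=1+x+x^2+x^3,\qquad (1+x^2+x^3)(1+x)^3=1+3x+4x^2+5x^3+6x^4+4x^5+x^6,$$
and both coefficient sequences, $1,1,1,1$ and $1,3,4,5,6,4,1$, are log-concave with no internal zeros (for the latter: $16\ge 15$, $25\ge 24$, $36\ge 20$). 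Hence $P_1(x)=(1+x+x^2+x^3)(1+x)^{n-3}$ and $P_2(x)=\left(1+3x+4x^2+5x^3+6x^4+4x^5+x^6\right)(1+x)^{n-6}$ are each products of two log-concave NIZ polynomials, so they are log-concave by the convolution theorem already quoted in Section 1. This is exactly the paper's argument, and it replaces your anticipated case analysis with two lines.

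Note also that your caution about small $n$ is not merely prudence: it is necessary, because the claim as stated is false below a threshold. The regrouping for $P_2$ requires $n\ge 6$, and for $n=4$ one has $P_2(x)=(1+x^2+x^3)(1+x)=1+x+x^2+2x^3+x^4$, where $u_2^2=1<2=u_1u_3$; for $n=5$, $P_2(x)=1+2x+2x^2+3x^3+3x^4+x^5$ fails at $u_2^2=4<6=u_1u_3$; and for $n=3$ the polynomial $1+x^2+x^3$ has an internal zero. So the direct verification you propose, had you carried it out, would not ``collapse to an elementary estimate'' valid for all $n$: it would uncover genuine counterexamples for $n\le 5$, and any correct proof (including the paper's, which silently assumes the factor $(1+x)^{n-6}$ makes sense) must restrict to $n\ge 6$ for $P_2$. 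As written, your log-concavity step is both incomplete and, without this threshold made explicit, unfixable.
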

   \begin{proof}
This topology see \cite {Ko} is such that $|\tau|=2^{n-1}\quad
or\quad  6\cdot2^{n-4}$. In the first case, it is induced by a
partition,
 whose blocks are all singletons, but one have cardinality two. So, its open polynomial is
\begin{eqnarray*}
       P_1(x) & =& (1+x^2)(1+x)^{n-2}  \\
       P_1(x) &= & (1+x+x^2+x^3)(1+x)^{n-3}
     \end{eqnarray*}
 In the second case, $\tau$
 is the disjoint union  of the discrete topology on $(n-3)$ elements and the chain topology $\{\phi,\; \{a,b\},\;
 \{a,b,c\} \}.$  The open polynomial is then
\begin{eqnarray*}
       P_2(x) & =&  (1+x^2+x^3)(1+x)^{n-3} \\
       P_2(x) &= & (1+x^2+x^3)(1+x)^3(1+x)^{n-6} \\
       P_2(x) &= &  \left ( 1+3x+4x^2+5x^3+6x^4+4x^5+x^6 \right )(1+x)^{n-6}
     \end{eqnarray*}
 Now, since $P_1,\; P_2$ are product  of two log-concave
 polynomials, it follows that they are log-concave too.
  In the case where the topology has $(n-1)$ singletons $\{ x_j \}, \; 1 \leq j \leq n-1$, as minimal open sets,  only one element of $X_n$ is not open, $a=x_n$, say.
 Any topology having $(n-1)$ singletons as minimal open sets, is obtained by adjoining a subset of the form
   $\{ x_1, x_2, \;...,\; x_{l},\; a \}, \; 1 \leq l \leq n-1$.
Its cardinality is $2^{n-1}+2^{n-l-1}, \; 1 \leq l \leq n-1.$
   The number of open sets of cardinality $i$ is $u_{i}=\binom{n-1}{i}, 0 \leq i \leq
   l$   and   $u_{l+i}=\binom{n-1}{l+i}+ \binom{n-l-1}{i-1}, 1 \leq i \leq n-l-1$
and then the open set polynomials are as stated. Also, These
polynomials are unimodal for all $1 \leq l \leq n-1$, because
 $$ P_l(x)  = (x+1)^{n-1}+x^{l+1}(1+x)^{n-l-1}= (x+1)^{n-l-1} \left ( x^{l+1}+(1+x)^{l} \right ). $$
 These polynomials are unimodal,  since they are the convolution product of a log-concave and a unimodal
 sequence.
\end{proof}
Now, we consider the open set  polynomials of topologies, in the
same range as above ($ |\tau| \geq 6\cdot2^{n-4}$) with $(n-2)$
minimal open sets, all of them are singletons. The idea is the same,
since all the minimal open sets are singletons, we form these
topologies by adjoining  suitable subsets, to the discrete topology
on $X_{i}$, to obtain the desired topologies in the range $ \geq
6\cdot 2^{n-4}$. So many cases are to be considered.
\begin{theorem}
 If $\tau$ is a topology obtained from  $(n-2)$ singletons as minimal open sets, and the adjoining of
 $\{a,\;x\} \; {\rm and} \; A_j=\{a,b,x, x_1, x_2,\;\ldots, \; x_j \}, \; 0 \leq j \leq n-3$,
then $|\tau|=6\cdot 2^{n-4}+2^{n-3-j}, \; 1 \leq j \leq n-3$.
 and the open polynomials associated with
 these topologies are given by
 $$ P_{j}(x)  =(x+1)^{n-2}+x^2(x+1)^{n-3}+ x^{j+3}(1+x)^{n-3-j}, \quad 0 \leq j
\leq n-3  $$
   \end{theorem}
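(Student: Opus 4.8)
The plan is to use the fact that in a finite topology the intersection of all open sets containing a point $p$ is again open, hence is the \emph{smallest} open set containing $p$; call it $U_p$. Then every open set is the union of the $U_p$ taken over its points, and conversely a set $S$ is open precisely when $p\in S$ implies $U_p\subseteq S$. So the first step is to read these minimal neighbourhoods off the prescribed base. The $(n-2)$ singleton minimal open sets give $U_x=\{x\}$ and $U_{x_i}=\{x_i\}$, so the $n-2$ points $x,x_1,\dots,x_{n-3}$ are \emph{free} (no constraint). Adjoining $\{a,x\}$ makes this the smallest open set containing $a$, so $U_a=\{a,x\}$; adjoining $A_j$ makes it the smallest open set containing $b$, so $U_b=A_j=\{a,b,x,x_1,\dots,x_j\}$. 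A short consistency check (that $q\in U_p$ forces $U_q\subseteq U_p$ for $p\in\{a,b\}$) confirms that these data genuinely define a topology whose minimal open sets are exactly the $(n-2)$ prescribed singletons.

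Next I would enumerate the open sets by a disjoint case analysis on whether $a$ and $b$ lie in $S$, recording each case's contribution to the open set polynomial (here the polynomial variable, which I keep calling $x$ to match the statement, is of course unrelated to the point $x$): each free point contributes a factor $(1+x)$ and a forced core of $m$ points contributes $x^{m}$. If $a,b\notin S$ then $S$ is an arbitrary subset of the $n-2$ free points, contributing $(1+x)^{n-2}$. If $a\in S$ but $b\notin S$, then $U_a=\{a,x\}$ forces the core $\{a,x\}$ while $x_1,\dots,x_{n-3}$ stay free, contributing $x^{2}(1+x)^{n-3}$. If $b\in S$, then all of $A_j$ is forced, a core of size $j+3$, and only $x_{j+1},\dots,x_{n-3}$ remain free, contributing $x^{j+3}(1+x)^{n-3-j}$. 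These three cases are exhaustive and pairwise disjoint, so summing gives
$$P_j(x)=(1+x)^{n-2}+x^{2}(1+x)^{n-3}+x^{j+3}(1+x)^{n-3-j},$$
which is the claimed polynomial.

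Finally, the cardinality is $|\tau|=P_j(1)=2^{n-2}+2^{n-3}+2^{n-3-j}$, and since $2^{n-2}+2^{n-3}=3\cdot 2^{n-3}=6\cdot 2^{n-4}$ this equals $6\cdot 2^{n-4}+2^{n-3-j}$ as asserted (at the boundary value $j=0$ the formula still evaluates consistently, to $2^{n-1}$). I expect the only genuine obstacle to be the bookkeeping at the start: one must verify that the adjoined sets produce \emph{exactly} the neighbourhoods $U_a$ and $U_b$ above, so that no unintended intersection shrinks them, and that the three cases really account for every open set with no overlap. Once that geometry is pinned down, the rest is the same product-of-generating-functions computation already used for the partition topologies and in the preceding theorems of this section.
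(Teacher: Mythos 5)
Your proof is correct and follows essentially the same route as the paper: the paper's assertion that adjoining $\{a,x\}$ contributes $2^{n-3}$ open sets and adjoining $A_j$ contributes $2^{n-3-j}$ open sets to $\mathcal{P}(X_{n-2})$ is exactly your case split on whether $a$ and $b$ belong to an open set, producing the same three disjoint families and the same three terms $(1+x)^{n-2}$, $x^{2}(1+x)^{n-3}$, and $x^{j+3}(1+x)^{n-3-j}$. Your minimal-neighbourhood formalism via the sets $U_p$ simply makes rigorous the justification that the paper leaves implicit.
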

 \begin{proof} This is case (a) in page 5 of \cite{ Ko}. The the adjoining  of
 $ \{a,\; x\}$  contributes $2^{n-3}$ open sets, while
 the adjoining of $A_j=\{a,b,x, x_1, x_2,\;\ldots, \; x_j \}, \; 0 \leq j
\leq n-3$, add $2^{n-j-3}$ open sets to the topology
$\mathcal{P}(X_{n-2})$, so the open set polynomials of these
topologies are
 \begin{eqnarray*}
P_{j}(x) & =&(x+1)^{n-2}+x^2(x+1)^{n-3}+ x^{j+3}(1+x)^{n-3-j} \\
          &=& (1+x)^{n-3-j} \left ( x^{j+3}+x^2(x+1)^{j}+(x+1)^{j+1}
          \right )
\end{eqnarray*}
\end{proof}
\begin{theorem}
 If $\tau$ is a topology obtained from  $(n-2)$ singletons as minimal open sets, with  the adjoining of $\{a,\;x_1,\;x_2\}$ and $\{a,\;b,\; x_1,\;x_2\}$
  then  the open set polynomial associated with
 this  topology is
 $$ P(x)  =(x+1)^{n-2}+x^3(x+1)^{n-4}+ x^4(1+x)^{n-4}=(1+2x+x^2+x^3+x^4)(1+x)^{n-4}.$$
   \end{theorem}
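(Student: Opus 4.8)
The plan is to obtain $P(x)$ by reading off the open sets of $\tau$ directly from its generators and sorting them by cardinality, exactly as in the proofs of the preceding theorems. First I would pin down the base: the $(n-2)$ minimal open sets are the singletons $\{x_1\},\dots,\{x_{n-2}\}$, and the two adjoined sets are $\{a,x_1,x_2\}$ and $\{a,b,x_1,x_2\}$; every open set is a union of these. The crucial structural remark is a forcing phenomenon. Since $a$ belongs to no open singleton, any open set containing $a$ must include one of the two generators passing through $a$, hence must contain $x_1$ and $x_2$. Since $b$ occurs only in the generator $\{a,b,x_1,x_2\}$, any open set containing $b$ must contain $a,x_1,x_2$ as well; in particular no open set contains $b$ without containing $a$.

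Using this, I would partition the open sets $U$ according to $U\cap\{a,b\}$, of which only three values occur, and count each class by cardinality. When $a,b\notin U$, the set $U$ is an arbitrary subset of $\{x_1,\dots,x_{n-2}\}$, contributing $(1+x)^{n-2}$. When $a\in U$ and $b\notin U$, we have $U=\{a,x_1,x_2\}\cup S$ with $S$ an arbitrary subset of the remaining $n-4$ singletons, contributing $x^3(1+x)^{n-4}$. When $a,b\in U$, we have $U=\{a,b,x_1,x_2\}\cup S$ with $S$ again arbitrary among the $n-4$ remaining singletons, contributing $x^4(1+x)^{n-4}$. Summing these three generating functions gives
$$P(x)=(1+x)^{n-2}+x^3(1+x)^{n-4}+x^4(1+x)^{n-4},$$
and setting $x=1$ recovers $|\tau|=2^{n-2}+2\cdot 2^{n-4}=6\cdot 2^{n-4}$, which checks the claimed cardinality and places $\tau$ in the announced range $\geq 6\cdot 2^{n-4}$.

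Finally I would factor out the common $(1+x)^{n-4}$ and simplify the bracket using $(1+x)^2=1+2x+x^2$:
$$P(x)=(1+x)^{n-4}\bigl[(1+x)^2+x^3+x^4\bigr]=(1+2x+x^2+x^3+x^4)(1+x)^{n-4},$$
which is the stated formula. There is no serious obstacle here: the argument is pure bookkeeping, and the only point that needs care is the forcing observation — correctly establishing that membership of $a$ drags in $\{x_1,x_2\}$ and that membership of $b$ drags in $\{a,x_1,x_2\}$. This is exactly what rules out the spurious class $U\cap\{a,b\}=\{b\}$ and what fixes the exponents $3$ and $4$ together with the base sizes $n-4$. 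One should also note in passing that the two adjoined generators are genuinely new (not already unions of singletons), which is immediate since each contains $a$.
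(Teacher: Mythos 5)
Your proof is correct and follows essentially the same route as the paper: the paper's (much terser) argument likewise counts $2^{n-2}$ open sets from the singletons and $2^{n-4}$ new open sets contributed by each of the two adjoined generators, which is exactly your three-class decomposition by $U\cap\{a,b\}$, made rigorous by your forcing observation. The only cosmetic difference is that the paper explains the factored form $(1+2x+x^2+x^3+x^4)(1+x)^{n-4}$ topologically, as the disjoint union of $\mathcal{P}(X_{n-4})$ with a six-open-set topology on four points, whereas you obtain it by factoring out $(1+x)^{n-4}$ algebraically.
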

 \begin{proof} This is case (b) in page 5 of \cite{ Ko}. The adjoining
 of the open sets $\{a,\;x_1,\;x_2\}$ and $\{a,\;b,\; x_1,\;x_2\}$ contributes, each one,
 with $2^{n-4}$ open sets. The wanted polynomial is
 polynomials $$  P(x)  =(x+1)^{n-2}+x^3(x+1)^{n-4}+ x^{4}(1+x)^{n-4}.$$
 This topology is obtained as a disjoint union of
 $\mathcal{P}(X_{n-4})$ and the topology having the diagram below.

 \unitlength=1truemm
\begin{picture}(40,40)(60,-10)
\put(119.25,-25){$\bullet$}
 \put(120,-24){\line(-1,1){10}}
\put(109.25,-15){$\bullet$}
 \put(110,-14){\line(1,1){10}}
 \put(119.25,-5){$\bullet$}
 \put(120,-4){\line(1,-1){10}}
 \put(129.25,-15){$\bullet$}
\put(120,-24){\line(1,1){10}}
\put(120,-4){\line(0,1){10}}%
\put(120,6){\line(0,1){10}}%
 \put(122,14){$X_4$}
\put(119.25,5){$\bullet$}
 \put(119,-29){$\emptyset$}
 \put(119.25,15){$\bullet$}
 \put(115,-35){Figure 2}
\end{picture}
\par\vspace{2.5cm}
\end{proof}
\begin{theorem}
 If $\tau$ is a topology obtained from  $(n-2)$ singletons as minimal open sets, with  the adjoining of $\{a,\;x_1\},\;  \{b,\;x_1\}$
  Then $|\tau|=10\cdot2^{n-4}$, and  the open polynomial of this topology is given by
$$ P_{\tau}(x)  =(x+1)^{n-2}+2x^2(x+1)^{n-3}+ x^3(1+x)^{n-3}=(1+x+2x^2+x^3)(1+x)^{n-3}.$$
   \end{theorem}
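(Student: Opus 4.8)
The plan is to read off the open sets of $\tau$ directly from its generating data and then group them by cardinality, exactly in the spirit of the preceding theorems. The minimal open sets are the singletons $\{x_1\},\dots,\{x_{n-2}\}$, so every subset of $\{x_1,\dots,x_{n-2}\}$ is open; the two remaining points $a$ and $b$ become open only through the adjoined sets $\{a,x_1\}$ and $\{b,x_1\}$. First I would record the key structural fact: since every open set is a union of base members, and the only base members containing $a$ (resp.\ $b$) are $\{a,x_1\}$ (resp.\ $\{b,x_1\}$), an open set may contain $a$ only if it also contains $x_1$, and likewise for $b$. Apart from this forced occurrence of $x_1$, the points $x_2,\dots,x_{n-2}$ may be included or omitted freely, which is precisely what will generate the factors $(1+x)^{n-3}$.

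Next I would split the open sets into four families according to which of $a,b$ they contain. The sets avoiding both $a$ and $b$ are exactly the arbitrary subsets of $\{x_1,\dots,x_{n-2}\}$, contributing $(1+x)^{n-2}$. The sets containing $a$ but not $b$ have the form $\{a,x_1\}\cup T$ with $T\subseteq\{x_2,\dots,x_{n-2}\}$, so they contribute $x^2(1+x)^{n-3}$; by symmetry the sets containing $b$ but not $a$ contribute another $x^2(1+x)^{n-3}$. Finally the sets containing both $a$ and $b$ are $\{a,b,x_1\}\cup T$ with $T\subseteq\{x_2,\dots,x_{n-2}\}$, contributing $x^3(1+x)^{n-3}$. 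Summing the four contributions gives $P_\tau(x)=(1+x)^{n-2}+2x^2(1+x)^{n-3}+x^3(1+x)^{n-3}$, and factoring out $(1+x)^{n-3}$ yields $(1+x+2x^2+x^3)(1+x)^{n-3}$, as stated. Evaluating at $x=1$ (equivalently, adding the counts $2^{n-2}+2\cdot 2^{n-3}+2^{n-3}$) gives $|\tau|=5\cdot 2^{n-3}=10\cdot 2^{n-4}$.

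The only genuinely delicate point is the characterization of the open sets: I must verify that taking unions of the base members produces exactly these four families and nothing more, so that no further union or intersection manufactures an open set escaping the constraint that ties $a$ and $b$ to $x_1$. Checking that $\{a,x_1\}\cap\{b,x_1\}=\{x_1\}$ and $\{a,x_1\}\cup\{b,x_1\}=\{a,b,x_1\}$ are already among the listed sets confirms closure, so the enumeration is complete; the remaining bookkeeping of cardinalities is then routine, and the small case $n=4$ (where the method produces $1+2x+3x^2+3x^3+x^4$, i.e.\ ten open sets) serves as a sanity check.
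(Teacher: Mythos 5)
Your proof is correct and follows essentially the same route as the paper: a direct enumeration of the open sets grouped by which of the adjoined generators $\{a,x_1\}$, $\{b,x_1\}$, $\{a,b,x_1\}$ they contain, each group contributing $(1+x)^{n-2}$, $x^2(1+x)^{n-3}$, $x^2(1+x)^{n-3}$, $x^3(1+x)^{n-3}$ respectively. The only cosmetic difference is that the paper obtains the factored form $(1+x+2x^2+x^3)(1+x)^{n-3}$ by observing that $\tau$ is the disjoint union of $\mathcal{P}(X_{n-3})$ and the five-element topology on $\{a,b,x_1\}$, whereas you get it by factoring out $(1+x)^{n-3}$ algebraically; your version is in fact more carefully justified, since you verify the characterization of the open sets rather than asserting the contributions.
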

\begin{proof}
If we adjoin the two open  sets $\{a,\;x_1\},\;  \{b,\;x_1\}$ to the
discrete topology $\mathcal{P}(X_{n-2})$,  we obtain a topology of
$10.2^{n-4}$ open sets, its open polynomial is
$$ P_{\tau}(x)  =(x+1)^{n-2}+2x^2(x+1)^{n-3}+ x^3(1+x)^{n-3}.$$
Here too,  this topology is the disjoint union of
$\mathcal{P}(X_{n-3})$ and the topology $\tau'= \{\phi,\; a,\; b,\;
ab,\; X_3 \}$ on the set $X_3$ depicted below.
\begin{center}
$$ \unitlength=1truemm
\begin{picture}(40,40)(25,-13)
 \put(39.25,0){$\bullet$}
 \put(40,1){\line(-1,1){10}}
\put(29.25,10){$\bullet$} 
 \put(30,11){\line(1,1){10}}
 \put(39.25,20){$\bullet$}
 \put(39,23){$X_3$}
 \put(40,21){\line(1,-1){10}}
 \put(49.25,10){$\bullet$}
\put(40,1){\line(1,1){10}} \put(40,1){\line(0,-1){10}} 
 \put(39,-14){$\emptyset$}
\put(39.25,-10){$\bullet$}  \put(30,-20) {Figure 3}
\end{picture}
$$
\end{center}

\end{proof}
In the following result, we give the open set polynomials, when
adjoining relatively large open sets.
\begin{theorem}
 If $\tau$ is a topology obtained from  $(n-2)$ singletons as minimal open sets, by
 adjoining of $\{a,\;x\},\;{\rm and} \;  \{b,\;x,\;x_1, x_2, \ldots, x_j\}, \;1 \leq j \leq n-3 $
  Then $|\tau|=6\cdot2^{n-4}+2^{n-2-j}, \; 1 \leq j \leq n-3$, and the open polynomial of this topology is given by
$$ P_j(x)  =(x+1)^{n-2}+x^2(x+1)^{n-3}+ x^{j+2}(1+x)^{n-3-j}+x^{j+3}(1+x)^{n-3-j}.$$
   \end{theorem}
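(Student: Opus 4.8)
The plan is to recover the topology explicitly from its generators and then count its open sets by cardinality, just as in the proofs of the preceding theorems in this section. Write the $(n-2)$ singleton minimal open sets as $\{x\},\{x_1\},\dots,\{x_{n-3}\}$, so that the two remaining points $a,b$ are exactly the points of $X_n$ that are not open. Since $a$ occurs in the base only through the adjoined set $\{a,x\}$, and $b$ occurs only through $\{b,x,x_1,\dots,x_j\}$, these two sets are the minimal open sets through $a$ and through $b$ respectively. Hence every open set $U$ is obtained by choosing an arbitrary subset $S$ of the $n-2$ singletons and then optionally adjoining $a$ (allowed precisely when the point $x$ lies in $S$) and optionally adjoining $b$ (allowed precisely when $\{x,x_1,\dots,x_j\}\subseteq S$). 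A routine verification that this family is closed under unions and finite intersections — the only nontrivial intersection being $\{a,x\}\cap\{b,x,x_1,\dots,x_j\}=\{x\}$, which is already a basis element — confirms that it is exactly the topology under consideration (one of Kolli's cases, see \cite{Ko}).

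First I would split the open sets into the four disjoint families determined by whether $a\in U$ and whether $b\in U$, and read off the contribution of each family to $\sum_j u_j x^j$. When neither point is present, $S$ ranges freely over all $n-2$ singletons, giving $(x+1)^{n-2}$. When $a$ is present but $b$ is not, the point $x$ is forced into $S$ while the remaining $n-3$ singletons stay free, so the two forced points $a,x$ together with the free factor contribute $x^2(x+1)^{n-3}$. When $b$ is present but $a$ is not, the $j+1$ points $x,x_1,\dots,x_j$ together with $b$ are forced ($j+2$ points in all) and the remaining $n-3-j$ singletons are free, contributing $x^{j+2}(1+x)^{n-3-j}$. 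Finally, when both $a$ and $b$ are present, the same $j+1$ singletons are forced, now together with both $a$ and $b$ ($j+3$ forced points), the free factor being unchanged, contributing $x^{j+3}(1+x)^{n-3-j}$. Summing the four terms yields the stated $P_j(x)$.

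The cardinality claim then follows by evaluating at $x=1$: the four families have sizes $2^{n-2}$, $2^{n-3}$, $2^{n-3-j}$, and $2^{n-3-j}$, and since $2^{n-2}+2^{n-3}=6\cdot 2^{n-4}$ while $2\cdot 2^{n-3-j}=2^{n-2-j}$, one obtains $|\tau|=6\cdot 2^{n-4}+2^{n-2-j}$. The only genuinely delicate point — and the step I would double-check most carefully — is the bookkeeping of forced versus free singletons in the two families involving $b$: one must confirm that exactly the $j+1$ singletons $x,x_1,\dots,x_j$ are compelled into $S$ and that the remaining $n-3-j$ singletons remain free, so that the exponents $j+2$ and $j+3$ and the common free factor $(1+x)^{n-3-j}$ come out correctly. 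Everything else is the same elementary union-of-basis-elements count used repeatedly in the earlier theorems.
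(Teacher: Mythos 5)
Your proof is correct and follows essentially the same route as the paper's: you count the open sets contributed by the discrete part $\mathcal{P}(X_{n-2})$, by each adjoined generator, and by their union, with forced points giving the powers of $x$ and free singletons giving the factors $(1+x)^{n-3}$ and $(1+x)^{n-3-j}$. Your four-way split according to membership of $a$ and $b$ is just a cleaner bookkeeping of that same count (and, if anything, more careful than the paper, which leaves the fourth term $x^{j+3}(1+x)^{n-3-j}$ coming from the union $\{a,b,x,x_1,\ldots,x_j\}$ implicit).
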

\begin{proof}
The  adjoining of the open  sets $\{a,\;x\}$ contributes
$\binom{n-3}{j}, 0 \leq j \leq n-3$  open sets of cardinality $j$.
By the same, the  contribution of  the open set $ \{b,\;x,\;x_1,
x_2, \ldots, x_j\}, \;1 \leq j \leq n-3 $, is $\binom{n-j-1}{j+i}, 0
\leq i \leq n-3-j$  open sets of cardinality $ j+i$. So the open
polynomial is, as stated $\displaystyle P_j(x)
=(x+1)^{n-2}+x^2(x+1)^{n-3}+
x^{j+2}(1+x)^{n-3-j}+x^{j+3}(1+x)^{n-3-j}.$
\end{proof}

\begin{theorem}
 If $\tau$ is a topology obtained from  $(n-2)$ singletons as minimal open sets, by
 adjoining  $\{a,\;x_1\},\;{\rm and} \;  \{b,\;x_2\} $,
  then $|\tau|=9\cdot2^{n-4}$.  The open polynomial of this topology is given by
$$ P(x) =(x+1)^{n-2}+2x^2(x+1)^{n-3}+ x^{4}(1+x)^{n-4}=(1+2x+3x^2+2x^3+x^4)(1+x)^{n-4}.$$
   \end{theorem}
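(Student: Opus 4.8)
The plan is to exhibit an explicit base for $\tau$, read off all of its open sets from that base, and then sort the open sets according to their intersection with the two non-singleton points $a$ and $b$. A base for $\tau$ is $\{\{x_1\},\ldots,\{x_{n-2}\},\,\{a,x_1\},\,\{b,x_2\}\}$. The crucial structural observation is that $\{a,x_1\}$ is the only base member meeting $a$ and $\{b,x_2\}$ the only one meeting $b$, so an open set can contain $a$ only together with $x_1$, and can contain $b$ only together with $x_2$; moreover $x_1\neq x_2$ and $\{a,x_1\}\cap\{b,x_2\}=\phi$, so these two requirements are independent of each other.

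First I would partition $\tau$ into four classes according to whether an open set contains $a$, $b$, both, or neither, and compute the contribution of each class to $P(x)=\sum_j u_j x^j$. The sets meeting neither $a$ nor $b$ are precisely the subsets of $X_{n-2}$, contributing $(x+1)^{n-2}$. The sets containing $a$ but not $b$ are $\{a\}\cup S$ with $x_1\in S\subseteq X_{n-2}$; fixing the two forced points $a,x_1$ and letting the other $n-3$ points of $X_{n-2}$ range freely contributes $x^2(x+1)^{n-3}$, and by the $a\leftrightarrow b$, $x_1\leftrightarrow x_2$ symmetry the sets containing $b$ but not $a$ contribute the same amount. The sets containing both are $\{a,b\}\cup S$ with $x_1,x_2\in S$; now four points are forced and $n-4$ are free, contributing $x^4(x+1)^{n-4}$. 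Adding the four contributions gives the first stated form
$$P(x)=(x+1)^{n-2}+2x^2(x+1)^{n-3}+x^4(1+x)^{n-4}.$$

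The remainder is bookkeeping. Pulling out $(1+x)^{n-4}$ and expanding the bracket $(1+x)^2+2x^2(1+x)+x^4=1+2x+3x^2+2x^3+x^4$ yields the factored form, and setting $x=1$ gives $|\tau|=2^{n-2}+2\cdot2^{n-3}+2^{n-4}=9\cdot2^{n-4}$. As an independent check, $\tau$ is the disjoint union of the discrete topology on the $n-4$ free singletons with the core topology on $\{a,b,x_1,x_2\}$ whose polynomial is exactly $1+2x+3x^2+2x^3+x^4$, so the value $9\cdot2^{n-4}$ is also forced by the multiplicativity $|\tau|=|\tau_{Y_1}|\cdot|\tau_{Y_2}|$ recorded in the introduction.

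The single delicate point is the case analysis itself: one must verify that admitting $a$ forces exactly $x_1$ (and $b$ forces exactly $x_2$), and that because $x_1\neq x_2$ the ``both'' class forces four distinct points rather than three. It is precisely this distinctness that produces the leading block $x^4(1+x)^{n-4}$ here, as opposed to the $x^3(1+x)^{n-3}$ block obtained in the companion theorem where the two adjoined pairs share the point $x_1$. Once the four classes are seen to be disjoint and to exhaust $\tau$, the polynomial follows with no further input.
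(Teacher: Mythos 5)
Your proof is correct and takes essentially the same route as the paper's: you classify the open sets by which of $a$, $b$ they contain (equivalently, the paper counts the contribution of $\mathcal{P}(X_{n-2})$, of each adjoined set $\{a,x_1\}$, $\{b,x_2\}$, and of their union $\{a,b,x_1,x_2\}$), obtaining the same four blocks $(x+1)^{n-2}+2x^2(x+1)^{n-3}+x^4(1+x)^{n-4}$, though you state the forced-point bookkeeping more carefully than the paper does. Your closing disjoint-union cross-check (core topology on $\{a,b,x_1,x_2\}$ times the discrete topology on the remaining $n-4$ points) is a device the paper applies to the neighboring theorems but not this one; it is a welcome addition but not a different argument.
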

\begin{proof}
The  adjoining of the open  sets $\{a,\;x_1\}$ contributes
$\binom{n-3}{j}, 0 \leq j \leq n-3$  open sets of cardinality $j$.
This is the same for the open set $ \{b,\; x_2\} $. Another
contribution comes from their union  $\{a,\;b, \; x_1, \; x_2\}$, it
is $\binom{n-4}{j+4}, 0 \leq j \leq n-4$. So, the wanted polynomial
is
 $ \displaystyle P_j(x)  =(x+1)^{n-2}+2x^2(x+1)^{n-3}+
x^{4}(1+x)^{n-4}.$
\end{proof}
 \begin{theorem}
 If $\tau$ is a topology obtained from  $(n-2)$ singletons as minimal open sets, by
 adjoining of $\{a,\;x \},\;{\rm and} \;  \{b, \;x_1,\; x_2, \; \ldots, x_j\}, \;2 \leq j \leq n-3
 $, then $|\tau|=6\cdot2^{n-4}+3\cdot2^{n-2-j}, \; 1 \leq j \leq n-3$ and the open set polynomials of these topologies are
$$ P_j(x)  =(x+1)^{n-2}+x^2(x+1)^{n-3}+ x^{j+1}(1+x)^{n-2-j}+x^{j+3}(1+x)^{n-j-3}.$$
   \end{theorem}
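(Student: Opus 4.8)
The plan is to compute $P_j(x)$ by sorting the open sets of $\tau$ according to how they meet the two exceptional points $a$ and $b$, exactly in the spirit of the preceding theorems. Adopting the stated convention, let $x_1,\dots,x_{n-2}$ be the $n-2$ elements whose singletons are the minimal open sets and let $a,b$ be the remaining two points, so that $X_{n-2}=\{x_1,\dots,x_{n-2}\}$ carries the discrete topology $\mathcal{P}(X_{n-2})$ inside $\tau$. A basis for $\tau$ is then these $n-2$ singletons together with the two adjoined sets $\{a,x\}$ and $B:=\{b,x_1,\dots,x_j\}$, where $x\in X_{n-2}$ is distinct from $x_1,\dots,x_j$ (so that $\{a,x\}\cap B=\emptyset$). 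The hypothesis $2\le j\le n-3$ guarantees that $x,x_1,\dots,x_j$ are $j+1$ distinct elements of $X_{n-2}$, so that all the binomial coefficients appearing below are legitimate and the exponent $n-3-j$ is nonnegative.

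First I would split $\tau$ into the four classes determined by the trace of an open set on $\{a,b\}$. The key structural remark is that $\{a,x\}$ is the only basis element containing $a$, and $B$ is the only basis element containing $b$; hence every open set that contains $a$ must also contain $x$, and every open set containing $b$ must contain all of $x_1,\dots,x_j$. Conversely, any subset of $X_n$ obeying these two constraints is a union of singletons together with the relevant one or two adjoined sets, hence open. This gives: the open sets meeting $\{a,b\}$ in $\emptyset$ are exactly the subsets of $X_{n-2}$, with generating polynomial $(x+1)^{n-2}$; those containing $a$ but not $b$ are the sets $\{a,x\}\cup S$ with $S\subseteq X_{n-2}\setminus\{x\}$, contributing $x^2(x+1)^{n-3}$; those containing $b$ but not $a$ are $B\cup S$ with $S\subseteq X_{n-2}\setminus\{x_1,\dots,x_j\}$, contributing $x^{j+1}(x+1)^{n-2-j}$; and those containing both $a$ and $b$ all contain the minimal such open set $\{a,x\}\cup B=\{a,b,x,x_1,\dots,x_j\}$ of size $j+3$, contributing $x^{j+3}(x+1)^{n-3-j}$.

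Summing the four generating polynomials yields $P_j(x)$ as stated, and substituting $x=1$ returns the total number of open sets. Since the four classes are counted by $2^{n-2}$, $2^{n-3}$, $2^{n-2-j}$ and $2^{n-3-j}$ respectively, this reads $|\tau|=6\cdot2^{n-4}+3\cdot2^{n-3-j}$, a value I would confirm against $P_j(1)$ as a sanity check on the header exponent. The whole computation is the same four-class template already used in the earlier cases of $(n-2)$ minimal singletons.

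The main obstacle is entirely in the second step: one must be certain that the four class descriptions are both exhaustive and mutually exclusive, i.e.\ that the interaction of the two adjoined sets creates no extra minimal open set and no overcounting. This is precisely where the configuration $x\notin\{x_1,\dots,x_j\}$ does the work: because $\{a,x\}$ and $B$ meet $X_{n-2}$ in the disjoint pieces $\{x\}$ and $\{x_1,\dots,x_j\}$, their intersection is empty (already open) and their union is the size-$(j+3)$ set governing the fourth class, so no new generator is forced. Pinning down this minimal ``both'' set is the one place where an off-by-one in the exponent can slip in, so I would verify its cardinality $j+3$ and its $n-3-j$ residual free coordinates directly before reading off the last two terms of $P_j(x)$.
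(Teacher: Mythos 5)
Your proof is correct and takes essentially the same route as the paper: decompose the open sets according to which of the adjoined generators $\{a,x\}$ and $\{b,x_1,\dots,x_j\}$ they contain (equivalently, their trace on $\{a,b\}$), observe that containing $a$ forces $x$ and containing $b$ forces $x_1,\dots,x_j$, and sum the four generating polynomials $(x+1)^{n-2}$, $x^2(x+1)^{n-3}$, $x^{j+1}(1+x)^{n-2-j}$, $x^{j+3}(1+x)^{n-3-j}$ — the paper's proof is just a terser version of this contribution count. Your sanity check also catches a genuine typo in the theorem's header: evaluating $P_j(1)$, or adding the class sizes $2^{n-2}+2^{n-3}+2^{n-2-j}+2^{n-3-j}$, gives $|\tau|=6\cdot 2^{n-4}+3\cdot 2^{n-3-j}$, not $6\cdot 2^{n-4}+3\cdot 2^{n-2-j}$ as printed.
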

\begin{proof}
The  adjoining of the open  sets $\{a,\;x\}$ contributes by
$\binom{n-3}{j}, 0 \leq j \leq n-3$  open sets of cardinality $j$.
The  contribution of  the open set $ \{b,\; x_1,\; x_2,\; \ldots,
x_j\}, \;2 \leq j \leq n-3 $, is $\binom{n-j-1}{j+i}, 0 \leq i \leq
n-3-j$  open set of cardinality $ j+i$ .
\end{proof}
In the following result, each adjoined set contributes the same
numbers of open sets
\begin{theorem}
 If $\tau$ is a topology obtained from  $(n-2)$ singletons as minimal open sets, by
 adjoining of $\{a,\;x_1, \; x_2\}\;{\rm and} \;  \{b,\;x_1, x_2 \} $
  Then $|\tau|=7\cdot2^{n-4}$, and its open polynomial is
$$ P(x)  =(x+1)^{n-2}+2x^3(x+1)^{n-4}+ x^{4}(1+x)^{n-4}.$$
   \end{theorem}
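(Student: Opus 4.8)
The plan is to write down all open sets of $\tau$ explicitly and then read off both $|\tau|$ and the open polynomial by a direct count. The topology is the smallest one containing the discrete topology $\mathcal{P}(X_{n-2})$ on the $(n-2)$ singletons $\{x_1\},\ldots,\{x_{n-2}\}$ together with the two adjoined sets $U=\{a,x_1,x_2\}$ and $V=\{b,x_1,x_2\}$, so the task is to close this family under finite intersection and arbitrary union.

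First I would record the key structural fact: $U$ is the only generator containing $a$ and $V$ is the only one containing $b$. Hence every open set containing $a$ arises from a union involving $U$ and must contain all of $\{a,x_1,x_2\}$; symmetrically every open set containing $b$ contains $\{b,x_1,x_2\}$. I then check that intersections produce nothing new, since $U\cap V=\{x_1,x_2\}$ and $U\cap S=V\cap S=S\cap\{x_1,x_2\}$ for any $S\subseteq X_{n-2}$ already lie in $\mathcal{P}(X_{n-2})$. Consequently the open sets split into four families according to whether they contain neither, only $a$, only $b$, or both of $a,b$: the subsets $S\subseteq X_{n-2}$; the sets $\{a\}\cup T$ with $\{x_1,x_2\}\subseteq T\subseteq X_{n-2}$; the sets $\{b\}\cup T$ with $\{x_1,x_2\}\subseteq T\subseteq X_{n-2}$; and the sets $\{a,b\}\cup T$ with $\{x_1,x_2\}\subseteq T\subseteq X_{n-2}$.

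Counting these gives both assertions at once. The first family contributes $2^{n-2}$ sets, with generating function $(x+1)^{n-2}$. In each of the remaining three families the set $T$ is determined by a free choice among the $n-4$ singletons other than $x_1,x_2$, so each family has $2^{n-4}$ members; the cardinalities are $3+s$, $3+s$, and $4+s$ respectively for $s=0,\ldots,n-4$ free elements, giving generating functions $x^3(x+1)^{n-4}$, $x^3(x+1)^{n-4}$, and $x^4(x+1)^{n-4}$. Summing the counts yields $|\tau|=2^{n-2}+3\cdot2^{n-4}=7\cdot2^{n-4}$, and summing the generating functions yields the stated $P(x)=(x+1)^{n-2}+2x^3(x+1)^{n-4}+x^4(1+x)^{n-4}$.

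The only point needing genuine care is the exhaustiveness claim in the second step, namely that closing $\mathcal{P}(X_{n-2})\cup\{U,V\}$ under union and intersection produces exactly these four families and nothing more; once that is pinned down, the remainder is bookkeeping. This is the same adjunction/disjoint-union analysis already carried out in the preceding theorems of this section, so I expect no new obstacle beyond verifying the closure carefully.
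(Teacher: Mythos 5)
Your proposal is correct and follows essentially the same route as the paper: the paper's (much terser) proof likewise counts the $2^{n-4}$ open sets contributed by each of $\{a,x_1,x_2\}$, $\{b,x_1,x_2\}$, and their union $\{a,b,x_1,x_2\}$ on top of $\mathcal{P}(X_{n-2})$, which is exactly your four-family decomposition. Your version just makes explicit the closure-under-union-and-intersection check that the paper leaves implicit.
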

\begin{proof}
The  adjoining of any of  the open  sets $\{a,\;x_1, x_2\}$,
$\{b,\;x_1, x_2\}$, or $\{a,\;b, \;, x_1, x_2, \}$ contribute
$2^{n-4}$ open sets to the topology. So, the open polynomial.
\end{proof}

\begin{theorem}
 If $\tau$ is a topology obtained from  $(n-2)$ singletons as minimal open sets, by
 adjoining of $\{a,\;x_1, \; x_2\}\;{\rm and} \;  \{b,\;x_1, x_3 \} $
  Then $|\tau|=13\cdot2^{n-5}$, and its open polynomial is
$$ P(x)  =(x+1)^{n-2}+2x^3(x+1)^{n-4}+ x^{5}(1+x)^{n-5}.$$
   \end{theorem}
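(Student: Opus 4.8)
The plan is to describe $\tau$ through an explicit base and then to sort the open sets by which of the two non-isolated points they contain. Write $X_n=\{a,b,x_1,\ldots,x_{n-2}\}$, where the $n-2$ singletons $\{x_i\}$ are the minimal open sets and $a,b$ are the two points that are not open. Put $A=\{a,x_1,x_2\}$ and $B=\{b,x_1,x_3\}$, so the base of $\tau$ is $\{\{x_1\},\ldots,\{x_{n-2}\},A,B\}$. Since $A\cap B=\{x_1\}$ is already a base element, finite intersections produce nothing new, and hence $\tau$ is exactly the family of all unions of base members.

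First I would record the key structural fact: the only base set containing $a$ is $A$, and the only one containing $b$ is $B$. Consequently, for every open set $U$ one has $a\in U\iff A\subseteq U$ and $b\in U\iff B\subseteq U$. This lets me encode each open set $U$ by its trace $S=U\cap\{x_1,\ldots,x_{n-2}\}$ together with the two indicators of whether $a\in U$ and whether $b\in U$, subject to the constraints that $a\in U$ forces $x_1,x_2\in S$ and $b\in U$ forces $x_1,x_3\in S$; conversely every such datum is realised by a unique open set. This bijection is the crux of the argument, and the one place demanding care: one must check that adjoining singletons never forces either of $a,b$, so that the two membership indicators are genuinely independent once the forcing conditions on $S$ are met.

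Next I would partition $\tau$ into four classes according to $\{a,b\}\cap U$ and compute the cardinality generating function of each. When $a,b\notin U$, the set $U$ is an arbitrary subset of the $n-2$ singletons, contributing $(1+x)^{n-2}$. When $a\in U$ and $b\notin U$, we must have $\{x_1,x_2\}\subseteq S$ while the remaining $n-4$ singletons $x_3,\ldots,x_{n-2}$ are free, so $U=\{a,x_1,x_2\}\cup R$ with $R\subseteq\{x_3,\ldots,x_{n-2}\}$; this gives $x^3(1+x)^{n-4}$, and the symmetric class $b\in U$, $a\notin U$ gives another $x^3(1+x)^{n-4}$. When $a,b\in U$ we have $A\cup B=\{a,b,x_1,x_2,x_3\}\subseteq U$ with the remaining $n-5$ singletons free, contributing $x^5(1+x)^{n-5}$. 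Summing the four generating functions yields
$$P(x)=(x+1)^{n-2}+2x^3(x+1)^{n-4}+x^5(x+1)^{n-5},$$
as claimed.

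Finally, to obtain the cardinality I would evaluate at $x=1$, reading off $2^{n-2}+2\cdot2^{n-4}+2^{n-5}=(8+4+1)\cdot2^{n-5}=13\cdot2^{n-5}$, so that $|\tau|=13\cdot2^{n-5}$; equivalently one counts the four classes directly as $2^{n-2},\,2^{n-4},\,2^{n-4},\,2^{n-5}$ and adds. The whole argument is routine once the base has been identified, and I expect no genuine obstacle beyond verifying the independence of the two membership indicators in the encoding above.
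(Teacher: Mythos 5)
Your proof is correct and follows essentially the same route as the paper: the paper also counts the $2^{n-2}$ subsets of the singletons, the $2^{n-4}$ new open sets contributed by each of $\{a,x_1,x_2\}$ and $\{b,x_1,x_3\}$, and the $2^{n-5}$ sets containing their union $\{a,b,x_1,x_2,x_3\}$, which is exactly your four-class decomposition. Your version merely makes explicit the justification the paper leaves implicit (that $a\in U$ forces $A\subseteq U$ and $b\in U$ forces $B\subseteq U$, so the classes are well defined and exhaust $\tau$), which is a welcome tightening but not a different argument.
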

\begin{proof}
The  adjoining of the open  sets $\{a,\;x_1, x_2\}$ and $\{b,\;x_1,
x_3\}$, give raise to  $2^{n-4}$ open sets. Their union $\{a,\;b, \;
x_1,\; x_2, \; x_3, \}$ gives  $2^{n-5}$ other open sets. So, the
open polynomial is as stated above.
\end{proof}

\begin{theorem}
 If $\tau$ is a topology obtained from  $(n-2)$ singletons as minimal open sets, by
the  adjoining of $\{a,\;x_1, \; x_2\}\;{ and} \; \{b,\;x_1,x_2,\;
x_3 \} .$
  Then $|\tau|=6\cdot2^{n-4}$, and its open polynomial is
$$ P(x)  =(x+1)^{n-2}+x^3(x+1)^{n-4}+ x^{4}(1+x)^{n-5}+x^{5}(1+x)^{n-5} .$$
   \end{theorem}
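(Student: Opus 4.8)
The plan is to exhibit a base of $\tau$ and then enumerate the open sets by sorting them according to which of the two non-singleton points $a,b$ they contain. Write $A=\{a,x_1,x_2\}$ and $B=\{b,x_1,x_2,x_3\}$ for the two adjoined sets; a base of $\tau$ consists of the $(n-2)$ singletons $\{x_1\},\ldots,\{x_{n-2}\}$ together with $A$ and $B$. The key structural observation is that $a$ lies in exactly one base element (namely $A$) and $b$ lies in exactly one base element (namely $B$); hence every open set containing $a$ must contain all of $A$, and every open set containing $b$ must contain all of $B$. Since $A\cap B=\{x_1,x_2\}$ is already a union of singletons, no new minimal open set is created, and the minimal open sets are precisely the $(n-2)$ given singletons, as assumed.

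First I would record the four disjoint families into which the membership dichotomy on $\{a,b\}$ splits $\tau$. The open sets containing neither $a$ nor $b$ are exactly the subsets of $\{x_1,\ldots,x_{n-2}\}$, so there are $2^{n-2}$ of them and they contribute $(1+x)^{n-2}$ to the open polynomial. The open sets containing $a$ but not $b$ are the sets $A\cup S$ with $S\subseteq\{x_3,\ldots,x_{n-2}\}$, numbering $2^{n-4}$ and contributing $x^3(1+x)^{n-4}$. The open sets containing $b$ but not $a$ are the sets $B\cup S$ with $S\subseteq\{x_4,\ldots,x_{n-2}\}$, numbering $2^{n-5}$ and contributing $x^4(1+x)^{n-5}$. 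Finally the open sets containing both $a$ and $b$ are the sets $(A\cup B)\cup S=\{a,b,x_1,x_2,x_3\}\cup S$ with $S\subseteq\{x_4,\ldots,x_{n-2}\}$, again numbering $2^{n-5}$ and contributing $x^5(1+x)^{n-5}$. Adding the four counts gives $2^{n-2}+2^{n-4}+2^{n-5}+2^{n-5}=6\cdot2^{n-4}$, and adding the four generating functions gives exactly the stated polynomial $P(x)=(x+1)^{n-2}+x^3(x+1)^{n-4}+x^4(1+x)^{n-5}+x^5(1+x)^{n-5}$.

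The one point needing genuine care --- and the step I would treat as the main obstacle --- is the bookkeeping of which singletons are forced and which remain free in each family, since this is what fixes the exponents on $(1+x)$ and the leading powers of $x$. In the family containing $a$ but not $b$, the base set $A$ already supplies $x_1,x_2$, so only the $n-4$ elements $x_3,\ldots,x_{n-2}$ are free; in the two families involving $b$, the set $B$ (respectively $A\cup B$) already supplies $x_1,x_2,x_3$, leaving only the $n-5$ elements $x_4,\ldots,x_{n-2}$ free. A miscount here is essentially the only way the computation could fail. Once the four families are checked to be disjoint and exhaustive (immediate from the $\{a,b\}$-membership dichotomy), both the cardinality and the open polynomial follow by summation. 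Equivalently, one may recognize $\tau$ as the disjoint union of the discrete topology on the $n-5$ free singletons with a fixed topology on $\{a,b,x_1,x_2,x_3\}$ of open polynomial $(1+x)^3+x^3(1+x)+x^4+x^5$, and then apply the multiplicativity $|\tau|=|\tau_{Y_1}|\cdot|\tau_{Y_2}|$ recorded in the introduction to obtain the same answer as a cross-check.
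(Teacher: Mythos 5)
Your proposal is correct and takes essentially the same route as the paper: the paper's (terse) proof is exactly your four-family count, with the discrete part contributing $2^{n-2}$ sets, the sets containing $A=\{a,x_1,x_2\}$ contributing $2^{n-4}$, those containing $B=\{b,x_1,x_2,x_3\}$ contributing $2^{n-5}$, and those containing $A\cup B$ contributing $2^{n-5}$, summing to $6\cdot 2^{n-4}$ with the stated polynomial. Your closing disjoint-union cross-check (factoring off the discrete topology on the free singletons) is not in this particular proof, but it is precisely the device the paper uses in the neighboring theorems, so it is a faithful consistency check rather than a new method.
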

\begin{proof}
The  adjoining of the open  sets $\{a,\;x_1,\; x_2\}$ contributes
$2^{n-4}$ open sets, and $\{b,\;x_1, \;x_2,\; x_3\}$, contributes by
$2^{n-5}$  to the topology. Their union $\{a,\;b, \; x_1,\; x_2, \;
x_3 \}$ gives raise to $2^{n-5}$. So, the open polynomial is as
stated above.
\end{proof}
\begin{theorem}
 If $\tau$ is a topology obtained from  $(n-2)$ singletons as minimal open sets, by
 adjoining of $\{a,\;x_1, \; x_2\}\;{\rm and} \;  \{b,\;x_3,\; x_4 \} $
  Then $|\tau|=25\cdot2^{n-6}$, and its open polynomial is
$$ P(x)  =(x+1)^{n-2}+2x^3(x+1)^{n-4}+ x^{6}(1+x)^{n-6}.$$
   \end{theorem}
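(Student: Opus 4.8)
The plan is to follow the same bookkeeping used throughout this section: describe the open sets generated by the base, sort them according to which of the two adjoined sets they use, and read off the generating polynomial family by family. First I would fix the base of $\tau$. It consists of the $n-2$ singletons $\{x_1\},\ldots,\{x_{n-2}\}$ together with the two adjoined sets $S_1=\{a,x_1,x_2\}$ and $S_2=\{b,x_3,x_4\}$, where $a,b$ are the only two non-open points. Since $a$ lies only in $S_1$ and $b$ lies only in $S_2$, an open set $U$ contains $a$ if and only if $S_1$ occurs in the union expressing $U$, and $U$ contains $b$ if and only if $S_2$ occurs. This lets me split $\tau$ into four disjoint families according to the pair $(a\in U,\ b\in U)$, which is exactly the ``base plus $S_1$ plus $S_2$ plus $S_1\cup S_2$'' decomposition used in the preceding theorems.

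I would then count each family and record its contribution to $P_\tau$. When $a\notin U$ and $b\notin U$, the set $U$ is an arbitrary subset of $X_{n-2}$, contributing $(1+x)^{n-2}$ and $2^{n-2}$ sets. When $a\in U$ but $b\notin U$, the set must contain $S_1=\{a,x_1,x_2\}$ and is otherwise free on the remaining $n-4$ singletons, contributing $x^3(1+x)^{n-4}$ and $2^{n-4}$ sets; the symmetric case $b\in U$, $a\notin U$ contributes another $x^3(1+x)^{n-4}$ and $2^{n-4}$ sets. The decisive case is $a\in U$ and $b\in U$: here $U$ must contain $S_1\cup S_2$. Because the supports $\{x_1,x_2\}$ and $\{x_3,x_4\}$ are disjoint, $S_1\cup S_2=\{a,b,x_1,x_2,x_3,x_4\}$ has six elements, so $U$ is forced on six coordinates and free on the other $n-6$ singletons, giving $x^6(1+x)^{n-6}$ and $2^{n-6}$ sets. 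Summing the four contributions yields $P(x)=(x+1)^{n-2}+2x^3(x+1)^{n-4}+x^6(1+x)^{n-6}$, and adding the four counts gives $2^{n-2}+2\cdot 2^{n-4}+2^{n-6}=(16+8+1)2^{n-6}=25\cdot 2^{n-6}$, as claimed.

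The only genuine point to watch is this last family: the exponents $6$ and $n-6$, and hence the final coefficient $25$ rather than the $13$ of the earlier theorem with $\{a,x_1,x_2\}$ and $\{b,x_1,x_3\}$, come entirely from the disjointness of the two supports. In that earlier configuration the supports shared the point $x_1$, so $S_1\cup S_2$ had only five elements; here I would make explicit that disjointness forces exactly six fixed points, which is what changes the arithmetic. Everything else is the routine union-closure count applied throughout the section, so once this single numerical check is in place the statement follows immediately.
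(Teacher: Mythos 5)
Your proof is correct and takes essentially the same route as the paper: your four-family split according to whether $U$ contains $a$ and/or $b$ is precisely the paper's tally of the contributions of $\mathcal{P}(X_{n-2})$, of each adjoined set ($2^{n-4}$ apiece, giving $x^3(1+x)^{n-4}$ each), and of their six-element union ($2^{n-6}$, giving $x^6(1+x)^{n-6}$), summing to $25\cdot 2^{n-6}$ and the stated polynomial. The only difference is that you make explicit why the decomposition is exhaustive and disjoint (namely, $a$ occurs only in $S_1$ and $b$ only in $S_2$) and why disjointness of the supports forces six fixed coordinates, points the paper leaves implicit.
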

\begin{proof}
The  adjoining of the open  sets $\{a,\;x_1,\; x_2\}$ and $\{
b,\;x_3,\; x_4\}$, contributes  $2^{n-4}$ open sets each one to the
topology. Their union $\{a,\;b, \; x_1,\; x_2, \; x_3,\;x_4 \}$
gives raise to $2^{n-6}$. Furthermore, the  polynomial is
 $\displaystyle P(x)=(x+1)^{n-2}+2x^3(x+1)^{n-4}+ x^{6}(1+x)^{n-6}.$
\end{proof}

The open polynomials of the topologies with $(n-3)$ and $(n-4)$ and
in general $n-i$ minimal open sets, and cardinality $ \geq
6\cdot2^{n-4}$, are investigated in the following theorems. The case
$(n-3)$ has more than 10 cases. We give the open set polynomial
according to the cardinality of the topology $\tau$.
\begin{theorem}
 If $\tau$ is a topology having  $(n-3)$ singletons as minimal open sets and
 $|\tau|=6\cdot2^{n-4}$,
 then $\tau$  is one of the following : \\
$1)$ Adjunction of  the sets $\{a,\;x_1\}$ and $\{a, \; b,\; x_1\}$
and $\{a, \; c, \;x_1\}$ to $\mathcal{P}(X_{n-3})$. Its open
polynomial is then
$$ P(x)=(x+1)^{n-3}+x^2(x+1)^{n-4}+2 x^{3}(1+x)^{n-4}+x^4(x+1)^{n-4}.$$
 $ 2)$ Adjunction
of the sets $\{a,\;x_1\}$, $\{b,\;x_2\}$ and $\{a, \; c, \;
 x_1\}$ to $\mathcal{P}(X_{n-3})$. Its polynomial is
 $$ P(x)  =(x+1)^{n-3}+2x^2(x+1)^{n-4}+ x^{3}(1+x)^{n-4}+x^4(x+1)^{n-5}+ x^5(x+1)^{n-5}.$$
$ 3) $Adjunction of the sets $\{a,\;x_1\}$, $\{b,\;x_1\}$ and $\{a,
\;b, \;  c, \; x_1\}$, we obtain the open polynomial
$$ P(x)  =(x+1)^{n-3}+2x^2(x+1)^{n-4}+ x^{3}(1+x)^{n-4}+x^4(x+1)^{n-4}.$$
$ 4)$  Adjunction of the 3 sets $\{a,\;x_1\}$, $\{b,\;x_1\}$ and
$\{a,\;  c, \; x_1, \; x_2 \}$, yields the polynomial
$$ P(x)  =(x+1)^{n-3}+2x^2(x+1)^{n-4}+ x^{3}(1+x)^{n-4}+x^4(x+1)^{n-5}+ x^5(x+1)^{n-5}.$$
$5)$ The adjunction of  $\{a,\;x_1\}$ and $\{b,\;x_2\}$ and  $\{c,
\; x_1, \; x_3\}$, gives the  polynomial $$ P(x)
=(x+1)^{n-3}+2x^2(x+1)^{n-4}+ x^{3}(1+x)^{n-5}+2x^4(x+1)^{n-5}+
x^5(x+1)^{n-6}+x^6(x+1)^{n-6}.$$
 $6)$ The sets $\{a,\;x_1\}$, $\{b,\;x_1\}$ and  $\{c, \;
x_1, \; x_2, \; x_3\}$ adjoined to $\mathcal{P}(X_{n-3})$ give a
topology with open  polynomial
$$ P(x)  =(x+1)^{n-3}+2x^2(x+1)^{n-4}+
x^{3}(1+x)^{n-5}+x^4(x+1)^{n-5}+ 2x^5(x+1)^{n-6}+x^6(x+1)^{n-6}.$$
   \end{theorem}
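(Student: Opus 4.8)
The plan is to treat the enumeration of the six topologies as given by Kolli's classification \cite{Ko}, exactly as in the preceding theorems, and to concentrate the work on computing the six open polynomials by a single uniform counting scheme. The hypothesis ``$(n-3)$ singletons as minimal open sets'' means that $n-3$ of the points of $X_n$ are isolated; write them $x_1,\ldots,x_{n-3}$, and let $a,b,c$ be the three remaining points, none of which is open. In a finite topology each point $z$ has a smallest open neighborhood $U_z$, and a set $V$ is open precisely when $z\in V$ implies $U_z\subseteq V$. Thus $\tau$ is obtained from the discrete topology $\mathcal{P}(X_{n-3})$ by adjoining the sets $U_a,U_b,U_c$ and closing under unions and intersections; requiring $|\tau|=6\cdot2^{n-4}$ forces $(U_a,U_b,U_c)$ into exactly the six configurations listed, and this is the step I would import verbatim from \cite{Ko}.

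For the polynomial computation I would use the following device, which works uniformly across the six cases. A singleton $x_i$ occurring in none of $U_a,U_b,U_c$ is topologically free: it may be inserted into or removed from any open set independently. Hence, if $m$ denotes the number of such free singletons, the open polynomial factors as $P_\tau(x)=(1+x)^{m}\,Q(x)$, where $Q$ is the local polynomial $\sum_V x^{|V|}$ summed over the open subsets $V$ of the core $C=\{a,b,c\}\cup\{x_i : x_i\in U_a\cup U_b\cup U_c\}$. To evaluate $Q$ I would stratify the open subsets of $C$ by the trace $S=V\cap\{a,b,c\}$. Only those $S$ compatible with the implications among $a,b,c$ induced by the neighborhoods occur (for instance, if $a\in U_c$ then $c\in V$ forces $a\in V$). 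For each admissible $S$ the points of $S$ together with the singletons $\bigcup_{z\in S}U_z$ are forced into $V$ and contribute a fixed power $x^{e(S)}$, while the core singletons not yet forced remain free and contribute a factor $(1+x)^{f(S)}$. Summing the terms $x^{e(S)}(1+x)^{f(S)}$ over admissible $S$ gives $Q$, and multiplying by $(1+x)^m$ and re-expanding in powers of $(1+x)$ reproduces the displayed expressions. A convenient check at each step is $P_\tau(1)=6\cdot2^{n-4}$, that is $Q(1)\,2^{m}=6\cdot2^{n-4}$.

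The one genuinely delicate point is the bookkeeping in cases $5)$ and $6)$, where the three neighborhoods overlap in the shared singleton $x_1$ (and, in case $6)$, where $U_c$ is large). There a naive count that simply adds the ``contribution of each adjoined set'' double counts the open sets produced by the pairwise and triple unions $U_a\cup U_b$, $U_a\cup U_c$, and so on; these unions are themselves new open sets whose cardinalities must be tallied exactly once. The stratification by $S$ above avoids this automatically, but one must be scrupulous in recording, for each admissible $S$, precisely which singletons $\bigcup_{z\in S}U_z$ are thereby forced, since a single shared $x_1$ alters the exponent $e(S)$ and hence several coefficients at once. I expect this overlap accounting, rather than the classification (which is quoted from \cite{Ko}), to be where the computation must be verified most carefully, and the identity $P_\tau(1)=6\cdot2^{n-4}$ is the natural safeguard against a miscount.
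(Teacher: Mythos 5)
Your proposal is correct, and the underlying count is the same one the paper performs: the paper credits each adjoined set, and each union of adjoined sets, with the open sets $W\cup T$, where $T$ runs over subsets of the singletons outside $W$, i.e., with a contribution $x^{|W|}(1+x)^{(n-3)-s(W)}$, $s(W)$ being the number of singletons inside $W$; your admissible traces $S\subseteq\{a,b,c\}$ are in bijection with precisely these unions $W=\bigcup_{z\in S}U_z$, so your stratification reproduces the paper's terms one for one, and both you and the paper import the classification of the six configurations from Kolli. What your more systematic bookkeeping and the check $P_\tau(1)=|\tau|$ buy is error detection, and they in fact expose two discrepancies in the paper. First, the polynomial displayed at the end of the paper's own proof of case 1 is $(x+1)^{n-3}+2x^2(x+1)^{n-4}+x^3(1+x)^{n-4}+x^4(x+1)^{n-4}$, which contradicts the theorem statement; your stratification confirms that the statement is the correct version, since the trace $\{a\}$ contributes the single $x^2$ term while the two traces $\{a,b\}$ and $\{a,c\}$ contribute the two $x^3$ terms. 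Second, the polynomial stated in case 6 of the theorem fails your cardinality check: its coefficients sum to $23\cdot2^{n-6}$, not $6\cdot2^{n-4}=24\cdot2^{n-6}$. Your method (or the paper's, applied carefully: the relevant unions $\{a,b,x_1\}$, $\{c,x_1,x_2,x_3\}$, $\{a,c,x_1,x_2,x_3\}$, $\{b,c,x_1,x_2,x_3\}$, $\{a,b,c,x_1,x_2,x_3\}$ have sizes $3,4,5,5,6$ and contain $1,3,3,3,3$ singletons respectively) gives instead
$$P(x)=(1+x)^{n-3}+2x^2(1+x)^{n-4}+x^3(1+x)^{n-4}+x^4(1+x)^{n-6}+2x^5(1+x)^{n-6}+x^6(1+x)^{n-6},$$
whose coefficients do sum to $6\cdot2^{n-4}$. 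So your argument is not merely sound; carried out as written it corrects the printed result on these two points. The only caveat is the one you flag yourself: like the paper, you do not prove that $|\tau|=6\cdot2^{n-4}$ forces exactly these six configurations but quote it from Kolli, so on that step the two arguments are on equal footing.
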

\begin{proof}
We prove just the first case, the others are similar. The  adjoining
of  $\{a,\;x_1\}$ gives $2^{n-4}$ open sets. The sets   $\{a, \;
b,\; x_1\}$ and $\{a, \; c, \;x_1\}$  each one, also   gives
$2^{n-4}$ open sets. We obtain the open polynomial of the topology
 $$ P(x)=(x+1)^{n-3}+2x^2(x+1)^{n-4}+ x^{3}(1+x)^{n-4}+x^4(x+1)^{n-4}.$$
\end{proof}
The case where the topology has $|\tau|=7\cdot2^{n-4}$ open sets is
given in this result
\begin{theorem}
 If $\tau$ is a topology having  $(n-3)$ singletons as minimal open sets, and  $|\tau|=7\cdot2^{n-4}$,
 then  $\tau$ comes either by \\
 $1)$ Adjoining  the sets $\{a,\;x_1\}$, $\{b,\;x_1\}$ and $\{a, \; c, \;
 x_1\}$ to $\mathcal{P}(X_{n-3})$  in this case its open set polynomial is
$$ P(x)=(x+1)^{n-3}+2x^2(x+1)^{n-4}+ 2x^{3}(1+x)^{n-4}+x^4(x+1)^{n-4}.$$
 $2)$ Adjoining the open sets $\{a,\;x_1\}$, $\{b,\;x_1\}$ and $\{c, \; x_1, \;
 x_2\}$, and then
$$ P(x)=(x+1)^{n-3}+2x^2(x+1)^{n-4}+ x^{3}(1+x)^{n-4}+x^{3}(1+x)^{n-5}+2x^4(x+1)^{n-5}+x^{5}(1+x)^{n-5}.$$

   \end{theorem}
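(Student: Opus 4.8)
The plan is to reuse the template that governs the preceding theorems of this section: first appeal to Kolli's enumeration \cite{Ko} to cut the problem down to a finite list of concrete adjunctions, and then, for each surviving configuration, read the coefficient $u_j$ directly off the geometry of the topology. Since the statement already names the two adjunctions that produce $7\cdot 2^{n-4}$ open sets among the topologies whose minimal open sets are $(n-3)$ singletons, the work splits into two parts: (i) certifying that these two configurations are exhaustive, and (ii) deriving the two displayed polynomials by counting.

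For (i) I would invoke the classification in \cite{Ko}, which sorts the relevant topologies by the incidence pattern of the sets adjoined to $\mathcal{P}(X_{n-3})$; among those yielding cardinality exactly $7\cdot 2^{n-4}$, the only surviving patterns are $\{a,x_1\},\{b,x_1\},\{a,c,x_1\}$ and $\{a,x_1\},\{b,x_1\},\{c,x_1,x_2\}$. I expect this exhaustiveness to be the genuine obstacle, because it is not self-contained: one must trust the case analysis of \cite{Ko} and check that no alternative incidence pattern among $a,b,c,x_1,x_2$ reproduces the same count.

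For (ii) the cleanest count goes through smallest open neighborhoods. In a finite topology a set $U$ is open precisely when, for each $p\in U$, the set $U$ contains the smallest open set $U_p$ about $p$. In the first case $U_a=\{a,x_1\}$, $U_b=\{b,x_1\}$, $U_c=\{a,c,x_1\}$, so openness means that $a\in U$ or $b\in U$ forces $x_1\in U$, while $c\in U$ forces $a,x_1\in U$; in the second case only $U_c$ changes, to $\{c,x_1,x_2\}$. The remaining $x_i$ are isolated points and are therefore free, contributing a factor $(1+x)^{n-4}$ in the first case and $(1+x)^{n-5}$ in the second. It then suffices to enumerate the admissible subsets of the core $\{a,b,c,x_1\}$ (respectively $\{a,b,c,x_1,x_2\}$), weighting each by $x^{|\cdot|}$; this short enumeration gives the core polynomials $1+x+2x^2+2x^3+x^4$ and $1+2x+3x^2+4x^3+3x^4+x^5$, and multiplying by the free factor and regrouping reproduces exactly the two polynomials in the statement.

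Finally I would record the sanity check $P(1)=7\cdot 2^{n-4}$ in each case: the core polynomials sum to $7$ and to $14=2\cdot 7$, matching the free factors $2^{n-4}$ and $2^{n-5}$, which confirms the prescribed cardinality. The counting in (ii) is routine bookkeeping; as with the earlier theorems, the only real content is the appeal to \cite{Ko} for the completeness of the two-case list.
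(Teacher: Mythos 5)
Your proposal is correct: both of your core polynomials check out, since $(1+x+2x^2+2x^3+x^4)(1+x)^{n-4}$ and $(1+2x+3x^2+4x^3+3x^4+x^5)(1+x)^{n-5}$ expand to exactly the two displayed polynomials, and evaluation at $x=1$ gives $7\cdot 2^{n-4}$ in both cases. Your route differs from the paper's in organization. The paper never invokes the minimal-neighborhood criterion or a product factorization; it counts, term by term, the contribution of each adjoined generator and of the unions they force: $\{a,x_1\}$ and $\{b,x_1\}$ give $2^{n-4}$ open sets each (the terms $2x^2(x+1)^{n-4}$), the union $\{a,b,x_1\}$ together with $\{a,c,x_1\}$ give $2^{n-4}$ each (the terms $2x^3(1+x)^{n-4}$), and $\{a,b,c,x_1\}$ gives the final $x^4(x+1)^{n-4}$ --- and it does this only for case 1, with case 2 left implicitly as ``similar.'' Your factorization through the core on $\{a,b,c,x_1\}$ (resp.\ $\{a,b,c,x_1,x_2\}$) is the same count grouped differently, but it buys three things: the criterion that $U$ is open iff $U$ contains the minimal neighborhood of each of its points certifies that your seven (resp.\ fourteen) admissible cores are exactly the possible traces of open sets, so nothing is missed or double-counted (the paper's version silently relies on facts such as $\{a,c,x_1\}\cup\{b,x_1\}=\{a,b,c,x_1\}$ being already on the list); it dispatches both cases of the statement uniformly rather than only the first; and the evaluation $P(1)$ yields the cardinality $7\cdot 2^{n-4}$ as a built-in consistency check. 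On exhaustiveness --- why these two adjunctions are the only configurations with $(n-3)$ singleton minimal open sets and $|\tau|=7\cdot 2^{n-4}$ --- you and the paper are in the same position: both defer entirely to Kolli's classification \cite{Ko}, the paper by building it into the statement, you explicitly; neither verifies it independently, and you are right to flag that as the only non-routine ingredient.
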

\begin{proof}
The  adjoining of the open  sets $\{a,\;x_1 \}$ and $\{b,\;x_1\}$
contribute by $2^{n-4}$ open sets each one to the topology. Their
union $\{a,\;b, \; x_1 \}$, as well as  $\{a, \; c, \;
 x_1\}$ give raise to $2^{n-4}$ each one.
The open set, $\{a,b,c,x_1\}$  also gives  $2^{n-4}$ open sets. So,
the open polynomial is
 $$ P(x)=(x+1)^{n-3}+2x^2(x+1)^{n-4}+ 2x^{3}(1+x)^{n-4}+x^4(x+1)^{n-4}.$$
\end{proof}
The remaining cases of the topologies with $(n-3)$ open sets as
singletons, are gathered in this theorem.
\begin{theorem}
 If $\tau$ is a topology having  $(n-3)$ singletons as minimal open sets,
 then  $\tau$ is obtained \\
$1)$ By adjoining  the sets $\{a,\;x_1\}$,  $\{ b, \; x_1\}$ and $\{
c, \;x_1\}$ to  $\mathcal{P}(X_{n-3})$, $|\tau|=9\cdot2^{n-4}$ with
open polynomial
$$ P(x)=(x+1)^{n-3}+3x^2(x+1)^{n-4}+3 x^{3}(1+x)^{n-4}+x^4(x+1)^{n-4}.$$
 $ 2)$ By adjoining the open sets
of the sets $\{a,\;x_1\}$,  $\{b,\;x_1\}$ and $\{ c, \;
 x_2\}$, then  $|\tau|=15\cdot2^{n-5}$ and
 $$ P(x)  =(x+1)^{n-3}+3x^2(x+1)^{n-4}+ x^{3}(1+x)^{n-4}+2x^4(x+1)^{n-5}+ x^5(x+1)^{n-5}.$$
$ 3)$ By the adjunction of the sets $\{a,\;x_1\}$,  $\{b,\;x_2\}$
and $\{ c, \; x_3\}$, $|\tau|=27\cdot2^{n-6}$ with open polynomial
$$ P(x)  =(x+1)^{n-3}+3x^2(x+1)^{n-4}+ 3x^{4}(1+x)^{n-5}+x^6(x+1)^{n-6}.$$
$ 4)$  By the adjunction of the sets $\{a,\;x_1\}$, $\{b,\;x_2\}$
and $\{ c, \; x_1, \; x_2 \}$, we obtain $|\tau|=13\cdot2^{n-5}$ and
$$ P(x)  =(x+1)^{n-3}+2x^2(x+1)^{n-4}+ x^{3}(1+x)^{n-5}+ 3x^{4}(1+x)^{n-5}+ x^5(x+1)^{n-5}.$$
$5)$ By the adjoining of  $\{a,\;x_1\}$, $\{b,\;x_1\}$ and  $\{c, \;
x_2, \; x_3\}$, we obtain a topology with $|\tau|=25\cdot2^{n-6}$
with open polynomial
$$ P(x) =(x+1)^{n-3}+2x^2(x+1)^{n-4}+x^{3}(1+x)^{n-4}+
x^{3}(1+x)^{n-5}+ 2x^5(x+1)^{n-6}+x^6(x+1)^{n-6}.$$

   \end{theorem}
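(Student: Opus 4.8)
The plan is to follow the systematic counting used in the preceding theorems of this section, but to organize it around the trace of each open set on the three non-singleton points. Write $X_{n-3}$ for the $n-3$ points that are singleton minimal open sets and let $a,b,c$ be the remaining three points. Since the minimal open sets are exactly these singletons, none of $a,b,c$ is open by itself, so each of them occurs only inside the adjoined sets and their unions. Every $U\in\tau$ has a well-defined trace $R = U\cap\{a,b,c\}$, and grouping the open sets by their trace partitions $\tau$ into disjoint families; hence $P_\tau(x)$ is the sum of the contributions of these families, with no double counting.

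For a fixed achievable trace $R$, the open sets with that trace are exactly the sets $G_R\cup S$, where $G_R$ is the smallest open set of trace $R$ — the union of the adjoined generators needed to produce $R$ — and $S$ ranges over the subsets of $X_{n-3}$ disjoint from the singletons already forced into $G_R$. If $G_R$ has cardinality $c$ and contains $s$ singletons, this family contributes $x^{c}(1+x)^{n-3-s}$ to $P_\tau(x)$ and $2^{n-3-s}$ to $|\tau|$; the empty trace gives the base term $(1+x)^{n-3}$.

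First I would invoke Kolli's classification \cite{Ko} to obtain the finite list of admissible adjunctions, which is what yields exactly the five cases and guarantees that the list is exhaustive. For each case I would enumerate the achievable traces (the singletons $\{a\},\{b\},\{c\}$ together with the unions of the adjoined generators), read off $c$ and $s$ for the minimal set $G_R$ of each trace, and assemble $P_\tau$; summing $2^{n-3-s}$ over all traces then recovers the stated $|\tau|$ as an independent check. In case $(1)$, for instance, the traces $\{a\},\{b\},\{c\}$ each force only $x_1$ and contribute $x^2(1+x)^{n-4}$, the traces $\{a,b\},\{a,c\},\{b,c\}$ contribute $x^3(1+x)^{n-4}$, and $\{a,b,c\}$ contributes $x^4(1+x)^{n-4}$, giving the displayed polynomial and $|\tau|=9\cdot2^{n-4}$.

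The main obstacle is not any single computation but the correct determination of $G_R$ in the cases where the adjoined generators share their singletons unevenly, such as $(2)$, $(4)$ and $(5)$: there the union of two generators pulls in extra singletons and lowers the exponent $n-3-s$, which is what produces the additional terms carrying $(1+x)^{n-5}$ and $(1+x)^{n-6}$. One must verify that these unions create no unintended minimal open set and that each listed trace is genuinely achievable, so that the family size $2^{n-3-s}$ is correct. Once $G_R$ is pinned down in each case, every contribution is the routine factor $x^{c}(1+x)^{n-3-s}$, and the five polynomials follow.
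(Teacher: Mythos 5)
Your proposal is correct and takes essentially the same route as the paper: the paper likewise groups open sets by which adjoined generators (and unions of generators) they contain, counts $2^{n-3-s}$ sets in each group obtained by adding arbitrary free singletons, works out case $(1)$ explicitly, defers exhaustiveness of the five cases to Kolli's classification, and declares the remaining cases similar. Your trace-on-$\{a,b,c\}$ bookkeeping with the minimal set $G_R$ is just a more systematic phrasing of that same count.
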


\begin{proof}
Here too, just the first case suffices, the other proofs are
similar. The adjoining of each of the three open sets $\{y,\;x_1\},
y=a,b,c$ contributes  $2^{n-4}$ open sets to the topology. Their
unions  three times $2^{n-4}$. So, the cardinality of the topology
is $9\cdot2^{n-4}$, and its polynomial is
$$ P(x)=(x+1)^{n-3}+3x^2(x+1)^{n-4}+3 x^{3}(1+x)^{n-4}+x^4(x+1)^{n-4}.$$
\end{proof}

In what follows, we give the open polynomials of the topologies
having $(n-4)$ singletons as minimal open sets. We have in all five
cases, i.e; these topologies are obtained by adjoining some open
sets to the discrete topology on $(n-4)$ elements.

\begin{theorem}
 If $\tau$ is a topology, $|\tau| \geq 6\cdot2^{n-4}$  and having  $(n-4)$ singletons as minimal open sets,
 then it obtained by adjoining to $\mathcal{P}(X_{n-4})$ one kind of the open sets below:  \\
 $1)$ Either   $ \displaystyle \{a, \;x_1\}, \;  \{b, \;x_1\},\;\{c, \;x_1 \},\;
\{ d, \; x_1 \}$, and in this case, its open polynomial is  $$
P_1(x) =x(x+1)^{n-1}+(x+1)^{n-5},\; {\rm
and}\;|\tau|=17\cdot2^{n-5}.
$$
 $2)$ Or   $ \displaystyle \{a, \; x_1\}, \;
\{b, \; x_1\},\;\{c, \; x_1 \},\;\{ d, \; x_2 \}$, in  this case
$$ P_2(x) =x^3(x+1)^{n-3}+x(x+1)^{n-2}+(1+x+x^2)(x+1)^{n-6},\; {\rm
and}\;|\tau|=27\cdot2^{n-6}.$$
 $3)$ Or   $ \displaystyle \{a, \;x_1\},
\;  \{b, \;x_1\},\;\{c,\;x_1\},\; \{d, \; x_1,\;x_2 \}$, in this
case
$$ P_3(x) =x^3(x+1)^{n-3}+x(x+1)^{n-2}+(x+1)^{n-5},\; {\rm and}\;
|\tau|=13\cdot2^{n-5}.$$ $ 4)$ Or   $\{a,\; x_1\}, \; \{b,
\;x_1\},\;\{c, \;x_2 \},\; \{d, \; x_2 \}$, in  this case $$ P_4(x)
=x^2(x+1)^{n-2}+x^2(x+1)^{n-5}+(1+2x+3x^2+x^3)(x+1)^{n-6},\; {\rm
and}\;|\tau|=25\cdot2^{n-6}. $$
 $5)$  Or $\{a, \;x_1 \}, \;
\{b, \;x_1\},\;\{c, \;x_1 \},\; \{d, \; x_1,\;x_2 \}$,  in this case
$$
P_5(x) =x^2(x+1)^{n-2}+x^2(x+1)^{n-4}+x^2(x+1)^{n-5}+(x+1)^{n-4},\;
{\rm and}\;|\tau|=13\cdot2^{n-5}. $$
\end{theorem}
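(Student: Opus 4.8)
The plan is to reduce the statement to a structural description of such a topology and then to a bookkeeping of open sets by cardinality. Since the minimal open sets are exactly the $n-4$ singletons $\{x_1\},\ldots,\{x_{n-4}\}$, these $x_i$ are precisely the open points of $X_n$, while the four remaining points $a,b,c,d$ are non-open. In a finite topology every point $p$ has a least open neighbourhood $U_p=\bigcap\{U\in\tau:\ p\in U\}$, and $\tau$ is completely determined by the four sets $U_a,U_b,U_c,U_d$: the full power-set structure on the open points is forced, and closure under union generates everything else. Concretely, a subset $S\subseteq X_n$ is open if and only if $U_p\subseteq S$ for every non-open $p\in S$, so an open set is exactly a pair $(T,Y)$ with $T\subseteq\{a,b,c,d\}$ and $Y\subseteq\{x_1,\ldots,x_{n-4}\}$ subject to the closure constraint $\bigl(\bigcup_{p\in T}U_p\bigr)\cap\{x_1,\ldots,x_{n-4}\}\subseteq Y$.

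First I would invoke Kolli's classification \cite{Ko}: under the hypothesis $|\tau|\ge 6\cdot 2^{n-4}$, the four least neighbourhoods can only attach to the open points in the five essentially different ways listed, and in each the points $a,b,c,d$ form an antichain in the specialization order (so $U_p=\{p\}\cup(\text{open points})$). The cardinality bound is what forces this finiteness: any further entanglement of $a,b,c,d$ — sharing additional open points, or creating a strict inclusion $U_p\subsetneq U_q$ — multiplies the number of subsets excluded by the closure constraint and so drives $|\tau|$ below the threshold. I would therefore confirm exhaustiveness by this monotonicity and discard the configurations whose cardinality falls under $6\cdot 2^{n-4}$.

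Next, for each of the five configurations I would compute $P_\tau(x)$ by the generating-function count that recurs throughout this section. Writing $F_T=\bigl(\bigcup_{p\in T}U_p\bigr)\cap\{x_1,\ldots,x_{n-4}\}$ for the open points forced by $T$, the open polynomial is $\sum_{T\subseteq\{a,b,c,d\}} x^{|T|+|F_T|}\,(1+x)^{\,n-4-|F_T|}$, the factor $x^{|T|}$ recording the non-open points, $x^{|F_T|}$ the forced open points, and $(1+x)^{\,n-4-|F_T|}$ the free open points. For instance, in case $1$ every $U_p=\{p,x_1\}$ shares the single open point $x_1$, so the nonempty-$T$ terms all force $x_1\in Y$ and sum to $[(1+x)^4-1]\,x\,(1+x)^{n-5}$; adding the $T=\emptyset$ term $(1+x)^{n-4}$ and simplifying via $(1+x)^{n-4}-x(1+x)^{n-5}=(1+x)^{n-5}$ gives $P_1(x)=x(x+1)^{n-1}+(x+1)^{n-5}$ and $P_1(1)=2^{n-1}+2^{n-5}=17\cdot 2^{n-5}$, as claimed. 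The same scheme, with the appropriate set of shared open points in each case, produces $P_2,\ldots,P_5$ together with the stated cardinalities; the disjoint-union identity $|\tau|=|\tau_{Y_1}|\cdot|\tau_{Y_2}|$ and the partition product formula $P_\tau(x)=\prod_i(x^i+1)^{\alpha_i}$ can be used to peel off the factors coming from open points involved in no $U_p$.

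The main obstacle is not the algebra but the case distinction. Several configurations have superficially similar generators yet yield genuinely different polynomials, according to whether two non-open points share an open point, whether their forced sets $F_T$ coincide, and how the unions $\bigcup_{p\in T}U_p$ overlap. The delicate step is thus to pin down, for each of the five families, exactly which unions occur and what each $F_T$ is; once the union lattice of $\{U_a,U_b,U_c,U_d\}$ is correctly determined, every coefficient $u_j$ and hence $P_\tau(x)$ follows by routine inclusion–exclusion.
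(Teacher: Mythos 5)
Your proposal is correct and follows essentially the same route as the paper: exhaustiveness of the five configurations is deferred to Kolli's classification, and each polynomial is obtained by grouping the open sets according to which of the non-open points $a,b,c,d$ they contain, i.e., by counting the open sets contributed by each union of the adjoined generators. Your uniform formula $\sum_{T\subseteq\{a,b,c,d\}}x^{|T|+|F_T|}(1+x)^{\,n-4-|F_T|}$ is simply a cleaner packaging of the paper's per-case count (the paper carries out only case $1$ explicitly, getting $2^{n-5}$ sets from each adjoined generator and from each of their unions, and declares the remaining cases ``treated similarly''), so the two arguments coincide in substance.
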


\begin{proof}
The  adjoining of each of the four  open  sets $\{y,\;x_1\},
y=a,\;b,\;c,\; d$ contributes by $2^{n-5}$ open sets,  their unions
 $16$ times $2^{n-5}$. So, the cardinality of the
topology is $17.2^{n-4}$, and its polynomial is as stated. The other
cases are treated similarly.
\end{proof}

\begin{theorem}
 If $\tau$ is a topology having  $(n-i)$ singletons as minimal open sets, $5\leq i \leq n-2$,
 by adjoining the open sets $\{x_1,\;y_1\}, \;  \{x_1,\;y_2\},\;...,\;
\{ x_1,\;y_i \}, \;$, to the discrete topology
$\mathcal{P}(X_{n-i})$  then $|\tau|=2^{n-1}+2^{n-i-1}\texttt{}$,
and its open polynomial is
$$ P(x)  =x(x+1)^{n-1}+(x+1)^{n-i-1}.$$
By adjoining the open sets $\{x_1,\;y_1\}, \; \{x_1,\;y_2\},\;...,\;
\{ x_1,\;y_{i-1} \}, \{ x_2,\;y_i \} \;$,
  $\tau$ is a topology with $|\tau|=6.2^{n-4}+3.2^{n-i-2}$
  and its open polynomial is
$$ P(x)  =x^3(x+1)^{n-3}+x(x+1)^{n-2}+(1+x+x^2)(1+x)^{n-i-2}.$$
Finally, if $\tau$ is obtained by adjoining $\{x_1,\;y_1\}, \;
\{x_1,\;y_2\},\;...,\; \{ x_1,\;y_{i-1} \}, \{x_1,\; x_2,\;y_i \}
\;$, or $\{x_1,\;y_1\}, \; \{x_1,\;y_2\},\;...,\; \{ x_1,\;y_{i-1}
\}, \{x_1,\; y_1,\;y_i \} \;$,
  then $|\tau|=6.2^{n-4}+2^{n-i-1}$, and its open polynomial is

$$ P(x)  =x^3(x+1)^{n-3}+x(x+1)^{n-2}+(1+x)^{n-i-1}.$$
   \end{theorem}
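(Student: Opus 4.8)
The plan is to reduce all four adjunctions to a single combinatorial principle: describe the open sets of $\tau$ explicitly, and then read off $P_\tau(x)=\sum_{U\in\tau}x^{|U|}$ as a product of independent ``block'' factors. First I would record the characterization of open sets. Since a base of $\tau$ consists of the $n-i$ singletons $\{x_1\},\dots,\{x_{n-i}\}$ together with the adjoined sets, a subset $U\subseteq X_n$ is open if and only if every point of $U$ lies in some base element contained in $U$. Singletons are themselves basic, so they impose no condition; the only constraints come from $y_1,\dots,y_i$. Thus $U$ is open exactly when, for each $y_j\in U$, the smallest adjoined set containing $y_j$ is a subset of $U$; equivalently, a forcing rule of the form ``$y_j\in U\Rightarrow$ (certain $x$'s, and possibly another $y$, lie in $U$)'' holds.

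The second observation is that $P_\tau$ factors. The points of $X_n$ split into those never appearing in a forcing rule (the free singletons, each contributing a factor $1+x$) and a small interacting block built from $x_1,x_2$ and the $y_j$'s. Because the constraints couple only points inside the block, we get $P_\tau(x)=(1+x)^{f}\,Q(x)$, where $f$ is the number of free singletons and $Q$ is the generating polynomial of the block, computed by a short case split on which $y_j$ are present. Evaluating at $x=1$ then recovers $|\tau|$.

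For the first adjunction every $y_j$ forces only $x_1$, the free singletons are $x_2,\dots,x_{n-i}$ (so $f=n-i-1$), and the block over $\{x_1,y_1,\dots,y_i\}$ contributes $(1+x)+x\bigl((1+x)^i-1\bigr)=1+x(1+x)^i$; multiplying yields $x(1+x)^{n-1}+(1+x)^{n-i-1}$. For the second adjunction $y_i$ instead forces $x_2$, so the block splits into $\{x_1,y_1,\dots,y_{i-1}\}$ (factor $1+x(1+x)^{i-1}$) times $\{x_2,y_i\}$ (factor $1+x+x^2$); with $f=n-i-2$, and after collecting $x(1+x)^{n-3}+x^2(1+x)^{n-3}=x(1+x)^{n-2}$, one obtains $x^3(1+x)^{n-3}+x(1+x)^{n-2}+(1+x+x^2)(1+x)^{n-i-2}$. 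For the two third adjunctions the last set makes $y_i$ drag in two extra points ($x_1,x_2$ in the variant $\{x_1,x_2,y_i\}$, and $x_1,y_1$ in the variant $\{x_1,y_1,y_i\}$); in each case the ``$y_i$ present'' branch of the block contributes $x^3(1+x)^{i-1}$ respectively $x^3(1+x)^{i-2}$, but after multiplication by the respective free factor $(1+x)^{n-i-2}$ and $(1+x)^{n-i-1}$ both collapse to the single polynomial $x^3(1+x)^{n-3}+x(1+x)^{n-2}+(1+x)^{n-i-1}$. Finally $P_\tau(1)$ gives $2^{n-1}+2^{n-i-1}$, $6\cdot2^{n-4}+3\cdot2^{n-i-2}$, and $6\cdot2^{n-4}+2^{n-i-1}$ respectively.

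The main obstacle is bookkeeping the forcing relations correctly: seeing that $x_1$ and $x_2$ (respectively $x_1$ and $y_1$) sit in independent blocks so that the factorization is legitimate, and verifying that the two third-case variants — whose interacting blocks are genuinely different — simplify to an identical polynomial. Everything past that is routine algebra with powers of $(1+x)$.
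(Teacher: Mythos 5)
Your proof is correct, and it is actually more complete than the paper's own. The paper proves only the first adjunction, and does so additively: it takes the discrete part $(x+1)^{n-i}$ and adds the contribution $x(x+1)^{n-i-1}\left((1+x)^i-1\right)$ of the adjoined sets and all their unions, then dismisses the remaining three cases with ``the other cases are treated similarly.'' Your route is multiplicative instead: you characterize openness by forcing rules and factor $P_\tau(x)=(1+x)^f Q(x)$ into a free-singleton part times a block polynomial, which is precisely the disjoint-union product structure the paper sets up in its introduction ($|\tau|=|\tau_{Y_1}|\cdot|\tau_{Y_2}|$) and uses in some earlier theorems, but does not invoke in this proof. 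In case 1 the two decompositions amount to the same arithmetic, but your version buys two things: the case analysis becomes mechanical (each adjunction only changes the block polynomial), and you explicitly verify the point the paper skips entirely --- that the two variants of the third adjunction, whose blocks contribute $x^3(1+x)^{i-1}$ and $x^3(1+x)^{i-2}$ against the different free factors $(1+x)^{n-i-2}$ and $(1+x)^{n-i-1}$, nonetheless collapse to the identical polynomial $x^3(1+x)^{n-3}+x(1+x)^{n-2}+(1+x)^{n-i-1}$. One small point you assert without argument: that unions of the singletons and adjoined sets already form a topology, which is what legitimizes your ``base'' characterization of open sets. This does hold in all four cases, because pairwise intersections of the adjoined sets are $\emptyset$, $\{x_1\}$, or $\{x_1,y_1\}$, each of which is already in the generating family; a sentence noting this would close the only gap in an otherwise complete argument.
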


\begin{proof}
The  adjoining of each of the three open  sets $\{x_1, \;y_j\},
j=1,\;...\; i$ contributes  $2^{n-i-1}$, their different unions give
$\binom {i}{j} 2^{n-i-1}$. So, the  open set polynomial is
\begin{eqnarray*}
       P(x) & =& (x+1)^{n-i}+ x(x+1)^{n-i-1} \left ((1+x)^{i}-1\right )  \\
       P(x) &= & x(x+1)^{n-1}+(x+1)^{n-i-1}.
     \end{eqnarray*}
The other cases are treated similarly.
\end{proof}
As a final result, we state
\begin{theorem}
 If $\tau$ is a topology on the set $X_n$, such that $|\tau| \geq 6 \cdot
 2^{n-4}$, then its
  open set  polynomial is unimodal.
   \end{theorem}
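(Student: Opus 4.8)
The plan is to reduce the statement to a finite verification via the classification used throughout this section and then close every resulting case with a single uniform tool. If $|\tau|\ge 6\cdot 2^{n-4}$ then $\tau$ lies either in $\tau_1(n,k)$ or in $\tau_2(n,k)$. In the first case (which already holds for the wider range $k\ge 5\cdot 2^{n-4}$) Corollary 3.2 asserts that the only three possible open polynomials $P_1,P_2,P_3$ are unimodal, so nothing remains to be done there. In the second case every topology is, by the classification inherited from Kolli, one of the explicit families appearing in the preceding theorems of this section, indexed by the number $n-i$ of minimal open sets, and each of those theorems records the open polynomial in closed form. Thus the theorem reduces to checking that every polynomial on that list is unimodal.

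The uniform tool is Theorem 1.6: a product of the form $(1+x)^{m}\,Q(x)$ is unimodal as soon as $Q$ is unimodal with no internal zeros, since the binomial sequence $\binom{m}{j}$ is log-concave (hence unimodal and NIZ). The first step I would take with each listed polynomial is to factor out the smallest occurring power of $(1+x)$; the remaining factor $Q$ is then either a bounded-degree polynomial with explicit small integer coefficients, which is unimodal by inspection (for example the residual sequence $1,2,5,7,7,4,1$ arising in the $(n-4)$-minimal-open-set family), or a log-concave core perturbed by a single boundary monomial. The second step is to make the latter shape visible: for instance the family $(x+1)^{n-2}+x^2(x+1)^{n-3}+x^{j+3}(1+x)^{n-3-j}$ equals $(1+x)^{n-3-j}\bigl((1+x)^{j}(x^2+x+1)+x^{j+3}\bigr)$, where $(1+x)^{j}(x^2+x+1)$ is a convolution of two log-concave sequences, hence log-concave, and appending $x^{j+3}$ merely repeats the top coefficient, so it preserves unimodality and NIZ. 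These two moves dispatch the $(n-1)$-, $(n-2)$-, $(n-3)$- and $(n-4)$-minimal-open-set families; in each the residual factor is a binomial-type log-concave sequence adjusted by one matching monomial at an end, and Theorem 1.6 then promotes the product to a unimodal sequence.

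The main obstacle I anticipate is not any single inequality but the bookkeeping together with the NIZ condition. I must confirm that the enumeration inherited from Kolli is genuinely exhaustive on the range $|\tau|\ge 6\cdot 2^{n-4}$, and, for each residual factor $Q$, that it has no internal zeros. The NIZ requirement is indispensable here: the earlier example $\tau=\mathcal P(X_{n-2})\cup\{\dots\}$ with $|\tau|=2^{n-2}+3$ shows that a topology with many open sets can still produce a non-unimodal polynomial once an intermediate coefficient vanishes, so the convolution theorem cannot be invoked blindly. The delicate part is therefore to verify, in every listed family, that the boundary monomial I append lands adjacent to a neighbouring coefficient of the log-concave core and does not exceed it, so that no gap is opened and no second hump is created. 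Once this is checked case by case, Theorem 1.6 closes each $\tau_2$ case and Corollary 3.2 handles the three $\tau_1$ polynomials, completing the proof.
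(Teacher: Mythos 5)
Your overall reduction is exactly what the paper intends: the paper states this theorem with no proof whatsoever (it appears as a recapitulation of the section, immediately before the bibliography), so the entire content of a proof is the case-by-case unimodality check you describe, and Corollary 3.2 plus Theorem 1.6 is the right toolkit. However, your uniform tool breaks on the paper's own list. It is not true that in every family the residual factor, after pulling out the largest common power of $(1+x)$, is ``a binomial-type log-concave sequence adjusted by one matching monomial at an end.'' Two concrete failures: the partition case of Theorem 3.3 gives $P_1(x)=(1+x^2)(1+x)^{n-2}$, whose residual $1+x^2$ has coefficient sequence $1,0,1$ (an internal zero, not unimodal); and the family of Theorem 3.10 gives $P(x)=(x+1)^{n-2}+2x^3(x+1)^{n-4}+x^4(1+x)^{n-4}=(1+x)^{n-4}\left(1+2x+x^2+2x^3+x^4\right)$, whose residual has coefficient sequence $1,2,1,2,1$, which is not unimodal. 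In both cases Theorem 1.6 cannot be invoked on your factorization; the repair is to retain one extra factor of $(1+x)$, which yields the unimodal, NIZ residuals $1,1,1,1$ and $1,3,3,3,3,1$ respectively, and only then apply Theorem 1.6.

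The second failure is not a bookkeeping nuisance but a genuine boundary of the statement itself: for $n=4$ the family of Theorem 3.10 is the topology $\tau=\left\{\emptyset,\{x_1\},\{x_2\},\{x_1,x_2\},\{a,x_1,x_2\},\{b,x_1,x_2\},X_4\right\}$ on $X_4=\{x_1,x_2,a,b\}$, which has $|\tau|=7\geq 6\cdot 2^{n-4}=6$ open sets and open polynomial $1+2x+x^2+2x^3+x^4$ --- not unimodal, since the coefficient sequence $1,2,1,2,1$ dips and rises again. So the theorem as stated is false at $n=4$, and your plan, which promises to close every case, cannot succeed without an added hypothesis such as $n\geq 5$ (under which the extra-factor trick does close the Theorem 3.10 family, and, as far as I can verify, all remaining families on the list). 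In short: right strategy, the same one the paper leaves implicit, but the claimed uniform shape of the residuals is wrong, and carrying out the case analysis honestly reveals both the needed fix and a missing hypothesis in the statement being proved.
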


\bigskip
\hrule
\bigskip

\noindent 2000 {\it Mathematics Subject Classification}:
Primary 11B39; Secondary 11B75.\ \

\noindent \emph{Keywords: Open set polynomial, log--concave
sequence, polynomial with real zeros,topology,  unimodal sequence.}

\end{document}